\newcommand{\F}{\mathcal{F}}
\newcommand{\FF}{\{\F_t\colon t\geq 0\}}
\newcommand{\T}{\bf T}
\newcommand{\R}{\mathbb{R}}
\newcommand{\Ga}{G_{\alpha}}
\newcommand{\Ma}{M_{\alpha}}
\newcommand{\Ra}{R_{\alpha}}
\newcommand{\Va}{V_{\alpha}}
\newcommand{\Vat}{\tilde{V}_{\alpha}}
\newcommand{\Wa}{W_{\alpha}}
\newcommand{\phia}{\varphi_\alpha}
\newcommand{\psia}{\psi_\alpha} 
\newcommand{\wa}{w_\alpha}
\newcommand{\al}{(\alpha-L)}
\newcommand{\hit}[1]{\tau_{#1}}
\newcommand{\g}{{g}}
\newcommand{\gi}{\tilde{g}}
\newcommand{\D}{\mathcal{D}}
\newcommand{\CC}{\mathcal{C}}
\newcommand{\I}{\mathcal{I}}
\newcommand{\ea}[1]{e^{-\alpha{#1}}}
\newcommand{\sfp}{SF}
\newcommand{\ssfp}{SSF}  
\newcommand{\Ex}[2]{\E_{#1}\left(#2\right)}
\def\P{\operatorname{\bf P}}
\def\E{\operatorname{\bf E}}
\newtheorem{theorem}{Theorem}[section]
\newtheorem{corollary}[theorem]{Corolary}
\newtheorem{lemma}[theorem]{Lemma}
\newtheorem{proposition}[theorem]{Proposition}
\theoremstyle{definition}
\newtheorem{remark}[theorem]{Remark}
\begin{document}
\title{Explicit solutions in one-sided optimal stopping problems for one-dimensional diffusions }
\author{Fabi\'an Crocce$^*$ and Ernesto  Mordecki\footnote{Igu\'a 4225, Centro de Matem\'atica, Facultad de Ciencias, Universidad de la Rep\'ublica, Montevideo, Uruguay}} 

\maketitle
\begin{abstract}

Consider the optimal stopping problem of a one-dimensional diffusion 
with positive discount.
Based on Dynkin's characterization of the value   
as the minimal excessive majorant of the reward
and considering its Riesz representation, 
we give an explicit equation to find the optimal stopping threshold for problems with one-sided stopping regions, 
and an explicit formula for the value function of the problem. 
This representation also gives light on the validity of the smooth fit principle.
The results are illustrated by solving some classical problems,  
and also through the solution of: optimal stopping of the skew Brownian motion, and optimal stopping of the sticky Brownian motion, including cases in which the smooth fit principle fails.
\end{abstract}

%
%
\section{Introduction and problem formulation}
Consider a non-terminating and regular one-dimensional (or linear) diffusion $X=\{X_t\colon t\geq 0\}$, 
in the sense of It\^o and McKean \cite{itoMcKean} (see also Borodin and Salminen \cite{borodin}). The state space of $X$ is denoted by $\I$, 
an interval of the real line $\R$  with left endpoint $\ell=\inf\I$ and right endpoint $r=\sup\I$,
where $-\infty\leq\ell<r\leq\infty$. 
We exclude the possibility of absorbing and killing boundaries; if some of the boundaries belong to $\I$ we assume it to be both entrance and exit (i.e. reflecting boundary).
Denote by $\P_x$ the probability measure associated with $X$ when starting from $x$, 
and by $\E_x$ the corresponding mathematical expectation. 
Denote by $\T$ the set of all stopping times with respect to $\FF$, the usual augmentation of the natural 
filtration generated by $X$ (see I.14 in \cite{borodin}).

Given a non-negative lower semicontinuous reward function $\g\colon\I\to \R$ and a discount factor $\alpha >0$, 
consider the optimal stopping problem consisting in finding a function $\Va$ and a stopping time $\tau^*\in\T$, such that
\begin{equation}\label{eq:osp}
\Va(x)=\Ex{x}{\ea{\tau^*} \g(X_{\tau^*})} = \sup_{\tau \in \T}\Ex{x}{ \ea{\tau^*} \g(X_{\tau^*})}.
\end{equation}
The \emph{value function} $\Va$ and the \emph{optimal stopping time} $\tau^*$  are the solution of the problem.
 
The first problems in optimal stopping appeared in the framework of statistics, 
more precisely, in the context of sequential analysis (see  the book by Wald \cite{wald}). 
For continuous time processes a relevant reference is the book of Shiryaev \cite{shiryaev} 
that also has applications to statistics.
A new impulse to these problems is related to mathematical finance, 
where arbitrage considerations give that 
in order to price an American option one has to solve an optimal stopping problem. 
The first results in this direction were provided by Mc Kean \cite{mckean} in 1965 and Merton \cite{merton} in 1973, 
who respectively solved the perpetual put and call option pricing problem, by solving the corresponding optimal stopping problems in the context of the Black and Scholes model \cite{bs}.
For the state of the art in the subject see the book by Peskir and Shiryaev \cite{ps} and the references therein. A new approach for solving one-dimensional optimal stopping problems for very general reward functions is provided in the recent paper \cite{peskir2012duality}.

When considering optimal stopping problems we typically find two classes of results. The first one consists in the explicit solution to a concrete optimal stopping problem \eqref{eq:osp}. Usually in this case one has to --somehow-- guess the solution and prove that this guess in fact solves the optimization problem; we call this approach ``verification''. For example we can consider the papers \cite{mckean}, \cite{merton}, \cite{taylor}, \cite{shepp93}. The second class consists of general results, for wide classes of processes and reward functions. We call this the ``theoretical'' approach. It typically include results about properties of the solution. In this class we mention for example \cite{dynkin:1963}, \cite{grigelionis1966stefan}, \cite{elkaroui}.
But these two classes not always meet, as frequently in concrete problems the assumptions of the theoretical studies are not fulfilled, and, what is more important, many of these theoretical studies do not provide concrete ways to find solutions. In what concerns the first approach, a usual procedure is to apply the \emph{principle of smooth fit}, that 
generally leads to the solution of two equations: 
the \emph{continuous fit}  equation and the \emph{smooth fit} equation. 
Once these equations are solved, 
a verification procedure is needed in order to prove that the candidate is the effective solution of the problem (see chapter IV in \cite{ps}). This approach, when an explicit solution can be found, is very effective.
In what concerns the second approach, maybe the most important result is Dynkin's characterization of the solution of the value function $\Va$ as the least $\alpha$-excessive (or $\alpha$-superharmonic) majorant of the payoff function $\g$ \cite{dynkin:1963}. Other ways of classifying approaches in order to study optimal stopping problems include the martingale-Markovian dichotomy
as exposed in \cite{ps}.

In the present work we provide an explicit solution of a 
\emph{right-sided} optimal stopping problem for a one-dimensional diffusion process, 
i.e., when the optimal stopping time has the form 
\begin{equation}\label{eq:call}
\tau^*=\inf\{t\geq 0\colon X_t\geq x^*\}
\end{equation}
for some optimal threshold $x^*\in\I$, under mild regularity conditions. 
Right-sided problems are also known as \emph{call-type} optimal stopping problems. Analogous results are valid for left-sided problems.

An important byproduct of our results has to do with the smooth fit principle.
Our results are independent of this principle, 
but they give sufficient conditions in order to 
guarantee it.

In section \ref{section:definitions} some necessary definitions  and preliminary results are given. 
Our main results are presented in section \ref{section:main}.
In section \ref{section:sf} we discuss the consequences of our results related to the smooth fit principle.
Finally, in section \ref{section:examples} we present some examples, including the optimal stopping of the skew Brownian motion and of the sticky Brownian motion (suggested by Paavo Salminen), where particular attention to the smooth fit principle is given.

\section{Definitions and preliminary results}\label{section:definitions}

Denote by $L$ the \emph{infinitesimal generator} of the diffusion $X$, and by $\D_L$ its domain.
For any stopping time $\tau$ and for any $f\in \D_L$ 
 the following  discounted version of the Dynkin's formula holds:
\begin{equation}
\label{eq:dynkinFormula}
 f(x)=\Ex{x}{\int_0^{\tau} \ea{t} \al f(X_t) dt}+ \Ex{x}{\ea{\tau}f(X_\tau)}.
\end{equation}

The \emph{resolvent} of the process $X$ is the operator $\Ra$ defined by
\begin{equation*}
\Ra u(x)=\int_0^\infty e^{-\alpha t}\Ex{x}{u(X_t)}dt,
\end{equation*}
applied to a function $u\in \CC_b(\I)=\{u\colon\I\to \R, u \mbox{ is continuous and bounded}\}$. The range of the operator $\Ra$ is independent of  $\alpha>0$ and coincides with the domain of the infinitesimal generator $\D_L$. Moreover, for any $f\in \D_L$, $\Ra \al f = f$, and for any $u\in \CC_b(\I)$, $\al \Ra u = u$. 
In other terms, $\Ra$ and $\al$ are inverse operators.
Denoting by $s$ and $m$ the scale function and the speed measure of the diffusion $X$ respectively, we have that,
for any $f \in \D_L$,
the lateral derivatives with respect to the scale function exist for every $x\in (\ell,r)$.
Furthermore, they satisfy 
\begin{equation}\label{eq:atom} 
\frac{\partial^+ f}{\partial s}(x)- \frac{\partial^- f}{\partial s}(x)=  m(\{x\}) Lf(x),
\end{equation}
and the following identity holds for $z>y$:
\begin{equation}
\label{difRightDer}
 \frac{\partial^+ f}{\partial s}(z)-\frac{\partial^+ f}{\partial s}(y)=\int_{(y,z]} Lf(x) m(dx).
\end{equation}
This last formula allows us to compute the infinitesimal generator of $f$ at $x\in(\ell,r)$ as Feller's differential operator \cite{feller1957generalized}
\begin{equation}
\label{eq:difop}
Lf(x)=\frac{\partial}{\partial m} \frac{\partial^+}{\partial s}f(x).
\end{equation}
The infinitesimal generator at $\ell$ and $r$ (if they belong to $\I$) can be computed as
$Lf(\ell)=\lim_{x\to \ell^+}Lf(x)$ and $Lf(r)=\lim_{x\to r^-}Lf(x)$ respectively.

Given a function $u\colon \I \to \R$, and $x\in (\ell,r)$ we give to $Lu(x)$ the meaning given in \eqref{eq:difop} if it makes sense. 
We also define, if $\ell\in \I$,  $Lu(\ell)=\lim_{x\to \ell^+}Lu(x)$ and if $r\in \I$,  $Lu(r)=\lim_{x\to r^-}Lu(x)$, if the limit exists.

There exist two continuous functions $\phia \colon \I \mapsto \R^+$ decreasing,
and $\psia\colon \I\mapsto \R^+$ increasing, solutions of $\alpha u = Lu$, 
such that any other continuous function $u$ is a solution of the differential equation if and only if 
$u=a\phia+b\psia$, with $a,b$ in $\R$. 
Denoting by $\hit{z}=\inf \{t\colon X_t = z\}$ the hitting time of level $z\in\I$, we have 
	\begin{equation}
	\label{eq:hitting}
	\Ex{x}{e^{-\alpha \hit{z}}}=
	\begin{cases}
	\frac{\psia(x)}{\psia(z)},\quad x\leq z,\\[.5em]
	\frac{\phia(x)}{\phia(z)},\quad x\geq z.
	\end{cases}
	\end{equation}
The functions  $\phia$ and $\psia$, though not necessarily in $\D_L$, also satisfy \eqref{eq:atom}
for all $x\in (\ell,r),$ which allow us to conclude that in case $m(\{x\})=0$, 
the derivative at $x$ of both functions with respect to the scale  exists. 
From \eqref{difRightDer} applied to $\psia$, and taking into account  $\alpha \psia=L\psia$ we obtain for $z>y$
\begin{equation*}
\frac{\partial^+ \psia}{\partial s}(z)-\frac{\partial^+ \psia}{\partial s}(y)=\int_{(y,z]} \alpha \psia(x) m(dx);
\end{equation*}
the right hand side is strictly positive since $\alpha\psia$ is positive and $m$ charge every open set. We conclude that $\frac{\partial^+ \psia}{\partial s}$ is strictly increasing. In an analogous way it can be proven that $\frac{\partial^+ \phia}{\partial s}$ is increasing as well. The previous consideration, together with the fact that $\frac{\partial^+ \psia}{\partial s}(x)\geq 0$ and $\frac{\partial^+ \phia}{\partial s}(x)\leq 0$ allow us to conclude for $x\in (\ell,r)$:
\begin{equation*}
-\infty<\frac{\partial^- \phia}{\partial s}(x) \leq \frac{\partial^+ \phia}{\partial s}(x)<0<\frac{\partial^- \psia}{\partial s}(x) \leq \frac{\partial^+ \psia}{\partial s}(x)<\infty.
\end{equation*}

The \emph{Green function} of the process $X$ with discount factor $\alpha$ is defined by
\begin{equation*}
\Ga(x,y)=\int_0^\infty e^{-\alpha t}p(t;x,y)dt,
\end{equation*}
where $p(t;x,y)$ is the transition density of the diffusion with respect to the speed measure $m(dx)$
(this density always exists, see \cite{itoMcKean} or \cite{borodin}). 
The Green function may be expressed in terms of $\phia$ and $\psia$ as follows:
\begin{equation}
\label{eq:Garepr}
G_\alpha(x,y)=
\begin{cases}
w_\alpha ^{-1} \psia(x) \phia (y),\quad  & x\leq y, \\
w_\alpha ^{-1} \psia(y) \phia (x),  & x\geq y,
\end{cases}
\end{equation}
where $w_\alpha$ is the \emph{ Wronskian}, given by
\begin{equation*}
 w_\alpha=\frac{\partial \psia^+}{\partial s}(x)\phia(x)-\psia(x)\frac{\partial \phia^+}{\partial s}(x).
\end{equation*}
Observe that the Wronskian is positive and independent of $x$ \cite{itoMcKean},\cite{borodin}. 
Given $u\colon\I\to \R$, under the condition $\int_{\I} \Ga(x,y) |u(y)| m(dy)<\infty$, an application of Fubini's Theorem gives
\begin{equation}
\label{eq:Ra=Ga}
\Ra u(x)=  \int_{\I} \Ga(x,y) u(y) m(dy).
\end{equation}

A  non-negative Borel function $u\colon\I\to \R$ is called \emph{$\alpha$-excessive} for the process $X$ if
$e^{-\alpha t}\Ex{x}{u(X_t)}\leq u(x)$ for all $x\in \I$ and $t\geq 0$, and
 $\lim_{t \to 0} \Ex{x}{u(X_t)}= u(x)$ for all $x\in \I$.
A 0-excessive function is said to be \emph{excessive}. 

Consider the process killed at an independent exponential time of parameter $\alpha$, i.e. 
$$Y_t=\begin{cases}
X_t,& t<e_{\alpha}\\
\Delta,& \text{else}
\end{cases} 
$$
with $e_{\alpha}$ a random variable with exponential distribution of parameter $\alpha$ independent of $X$, and $\Delta$ the cemetery state, at which any function is defined to be zero.
It is easy to see that the Green function $G^Y$ of the process $Y=\{Y_t\colon t\geq 0\}$ coincides with $\Ga$; a Borel function $u\colon \I \mapsto \R$ is excessive for $Y$ if and only if it is $\alpha$-excessive for $X$.
In fact, the non-discounted optimal stopping problem for the process $Y$ has the very same solution 
(value function and optimal stopping time) as the $\alpha$-discounted optimal stopping problem for $X$.

For general reference on diffusions and Markov processes see 
\cite{borodin,itoMcKean,revuz,dynkin:books,ks}.

\section{Main results}\label{section:main}

Our departing point, inscribed in the Markovian approach, 
is Dynkin's characterization of the optimal stopping problem solution. Dynkin's characterization \cite{dynkin:1963} states that, if the reward function is lower semi-continuous,  $V$ is the value function of the non-discounted optimal stopping problem with reward $g$ if and only if $V$ is the least excessive function such that $V(x)\geq g(x)$ for all $x\in \I$. Applying this result for the killed process $Y$, and taking into account the relation between $X$ and $Y$, we obtain that $\Va$, the value function of the problem with discount $\alpha$, is characterized as the least $\alpha$-excessive majorant of $g$.

The second step uses Riesz's decomposition of an $\alpha$-excessive function. We recall this decomposition in our context
(see \cite{kw,kw1,dynkin:1969}). 
A function $u\colon \I \to \R$ is $\alpha$-excessive if and only if there exist a non-negative Radon measure $\mu$ and an $\alpha$-harmonic function such that
\begin{equation}
\label{eq:alphaexcessive}
 u(x)=\int_{(\ell,r)}\Ga(x,y)\mu(dy) + \text{($\alpha$-harmonic function)}.
\end{equation}
Furthermore, the previous representation is unique. The measure $\mu$ is called the representing measure of $u$.

The third step is based on the fact that the resolvent and the infinitesimal generator of  a Markov process are inverse operators. 
Suppose that we can write 
\begin{equation}\label{eq:inversionV}
\Va(x)=\int_{\I} \Ga(x,y)\al \Va(y)m(dy),
\end{equation}
where $L$ is the infinitesimal generator and $m(dy)$ is the speed measure of the diffusion. Assuming that the stopping region has the form $\I\cap\{x\geq x^*\}$, and taking into account that $\Va$ is $\alpha$-harmonic in the continuation region and $\Va=g$ in the stopping region we obtain as a suitable candidate to be the representing measure
\begin{equation}\label{eq:repMeasure}
\mu(dy)=
\begin{cases}
0,                            				&\text{ if $y<x^*$},\\
k\delta_{x^*}(dy),  	&\text{ if $y=x^*$},\\
(\alpha-{L})g(y)m(dy),  			&\text{ if $y>x^*$},
\end{cases}
\end{equation}

This approach was initiated by Salminen in 
\cite{salminen85} (see also \cite{mordecki-salminen}).
According to Salminen's approach,
once the excessive function is represented as an integral with respect to the Martin kernel $\Ma(x,y)$, 
\begin{equation}\label{eq:martin}
V_{\alpha}(x)=\int_{\I} \Ma(x,y)\kappa(dy)
\end{equation}
one has to find the representing measure $\kappa$. Martin and Green kernels are related by $\Ma(x,y)=\frac{\Ga(x,y)}{\Ga(x_0,y)}$, where $x_0$ is a reference point. Therefore, Riesz's representation of $\Va$ is related with the one in \eqref{eq:martin} by considering $\nu(dy)=\frac{\kappa(dy)}{\Ga(x_0,y)}$ and $\Ma(x,\ell)\kappa(\{\ell\})+\Ma(x,r)\kappa(\{r\})$ as the $\alpha$-harmonic function.

It is useful to observe that when the optimal stopping problem \eqref{eq:osp} is right-sided with optimal threshold $x^*$ it has a value function $\Va$ of the form
\begin{equation*}
 \Va(x)=
\begin{cases}
 \Ex{x}{ \ea{\hit{x^*}}} g(x^*), \quad &x<x^*,\\
 g(x),& x\geq x^*.
\end{cases}
\end{equation*}
Furthermore, $\Va(x)\geq g(x)$ for all $x\in \I$ and, in virtue of equation \eqref{eq:hitting}, we have
\begin{equation}
\label{eq:VaCall}
 \Va(x)=
\begin{cases}
 \frac{g(x^*)}{\psia(x^*)}\psia(x), \quad &x<x^*,\\
 g(x),& x\geq x^*.
\end{cases}
\end{equation}

The state space of the process can include or not the left endpoint $\ell$ and the right endpoint $r$.
In order to simplify, with a slight abuse of notation,
we write $[\ell,x]$, $[\ell,x)$, $[x,r]$, $(x,r]$ to denote 
respectively $\I\cap\{y\leq x\}$, $\I\cap\{y<x\}$, $\I\cap\{y\geq x\}$, $\I\cap\{y>x\}$.

 We say that the function $g\colon\I \mapsto \R$ satisfies the  \emph{right regularity condition} (RRC) 
 if there exist a point $x_1\in\I$ and a function $\gi\colon \I \to \R$ (not necessarily non-negative) such that $\gi(x)=g(x)$ for $x\geq x_1$ and 
\begin{equation}
\label{eq:invh}
 \gi(x)=\int_{\I} \Ga(x,y) \al \gi(y) m(dy)\quad (x\in\I).
\end{equation}
Proposition \ref{propInversion2} gives conditions in order to verify the inversion formula \eqref{eq:invh}. Informally speaking, the RRC is fulfilled by functions $g$ that satisfy all the local conditions --regularity conditions-- to belong to $\D_L$ for $x\geq x_1$, and does not increase as quick as $\psia$ does when approaching $r$ (in the case $r\notin \I$). Observe that if $\g$ satisfies the RRC for certain $x_1$ it also satisfies it for any greater value; and of course, if $\g$ itself satisfy \eqref{eq:invh} then it satisfies the RRC for all $x_1$ in $\I$.  To take full advantage of the following result it is desirable to find the least $x_1$ such that the RRC holds.

The main result follows.

\begin{theorem}
\label{mainteo} Consider a diffusion $X$ and a reward function $g$ that satisfies the RRC for some $x_1$. The optimal stopping problem is right-sided with optimal threshold $x^*\geq x_1$ if and only if:
\begin{equation}
\label{eq:defx*teo}
g(x^*)\geq \wa^{-1} \psia(x^*)\int_{(x^*,r]}\phia(y)\al g(y) m(dy),
\end{equation}
\begin{equation}
\label{eq:algmayor0}
 \al g(x)\geq 0, \quad x\in \I \colon x> x^*
\end{equation}
and 
\begin{equation}
\label{eq:mayorizq}
 \frac{g(x^*)}{\psia(x^*)}\psia(x)\geq g(x), \quad x\in \I \colon x<x^*.
\end{equation}

Furthermore, in the previous situation:
\begin{itemize}
\item Riesz's representation of the value function $\Va$ has representing measure as given in \eqref{eq:repMeasure} with 
$$k=\frac{g(x^*)-\wa^{-1} \psia(x^*)\int_{(x^*,r]}\phia(y)\al g(y) m(dy)}{\wa^{-1} \psia(x^*)\phia(x^*)},$$
while the $\alpha$-harmonic part vanishes;
\item if $x^*>x_1$ and the inequality \eqref{eq:defx*teo} is strict, then $x^*$ is the smallest number satisfying this strict inequality and \eqref{eq:algmayor0}, in particular 
\begin{equation}
\label{eq:otrax*}
g(x^*)\leq \wa^{-1} \psia(x^*)\int_{[x^*,r]}\phia(y)\al g(y) m(dy);
\end{equation}
which implies that $ k \leq \al g(x^*)m(\{x^*\})$;
\end{itemize}
\end{theorem}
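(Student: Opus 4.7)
The plan is to exhibit the candidate $V^*$ defined by the right-hand side of \eqref{eq:VaCall} as an explicit Green potential $\int_{\I}\Ga(x,y)\mu(dy)$ with $\mu$ given by \eqref{eq:repMeasure}, and then to read off the biconditional from the signs of the ingredients of $\mu$, combined with Dynkin's description of $\Va$ as the least $\alpha$-excessive majorant of $g$ applied to the killed process.

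The first step is to establish this Green-potential identity. Applying the RRC relation \eqref{eq:invh} to $\gi$ at arbitrary $x\geq x_1$, splitting the integral at $x^*$, and using the factorization \eqref{eq:Garepr} of $\Ga$, evaluation at $x=x^*$ yields
\[
g(x^*)=\wa^{-1}\phia(x^*)\int_{(\ell,x^*]}\psia(y)\al\gi(y)m(dy)+\wa^{-1}\psia(x^*)\int_{(x^*,r]}\phia(y)\al g(y)m(dy),
\]
which pins down $k$ exactly as in the theorem: it is the weight one must place on $\delta_{x^*}$ so that the Green potential of $k\delta_{x^*}$ absorbs the contribution of the first integral. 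Substituting back and again using \eqref{eq:Garepr}, the Green potential of $\mu$ reproduces $\frac{g(x^*)}{\psia(x^*)}\psia(x)$ on $(\ell,x^*)$ and $g(x)$ on $[x^*,r]$; that is, it coincides with $V^*$, and no $\alpha$-harmonic term is needed.

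With this identity in hand, the \emph{if} direction goes as follows: the three inequalities force $\mu\geq 0$, since \eqref{eq:defx*teo} is precisely $k\geq 0$ and \eqref{eq:algmayor0} is the non-negativity of the density on $(x^*,r]$; hence $V^*$ is $\alpha$-excessive as a Green potential of a non-negative Radon measure, and $V^*\geq g$ follows from \eqref{eq:mayorizq} together with $V^*=g$ on $[x^*,r]$. Since \eqref{eq:hitting} identifies $V^*(x)=\Ex{x}{\ea{\hit{x^*}}g(X_{\hit{x^*}})}$, Dynkin's characterization gives $V^*\leq\Va\leq V^*$, whence $\Va=V^*$ and $\hit{x^*}$ is optimal. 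Conversely, if the problem is right-sided with threshold $x^*\geq x_1$, then $\Va=V^*$ majorizes $g$, which is \eqref{eq:mayorizq}, while the algebraic identity of the previous paragraph exhibits $\Va$ as $\int_{\I}\Ga\,d\mu$; because $\Va$ is $\alpha$-excessive, uniqueness of the Riesz decomposition \eqref{eq:alphaexcessive} forces this $\mu$ to coincide with the non-negative representing measure and the harmonic part to vanish, which is exactly \eqref{eq:defx*teo} and \eqref{eq:algmayor0}.

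The explicit formula for $k$ and the vanishing of the $\alpha$-harmonic part are by-products of the Riesz identification above. The main obstacle I anticipate is the minimality clause: to rule out some $x'\in[x_1,x^*)$ satisfying both the strict form of \eqref{eq:defx*teo} and \eqref{eq:algmayor0}, I would construct the candidate potential $V_{x'}$ by \eqref{eq:VaCall} with threshold $x'$, verify it is $\alpha$-excessive by the same argument as the \emph{if} direction, and aim to contradict $V_{x'}(x')=g(x')<\Va(x')$ (which holds because $x'$ lies in the continuation region) by showing $V_{x'}$ is a global majorant of $g$ and invoking Dynkin's minimality to conclude $\Va\leq V_{x'}$; the delicate point is extracting this majorization from the two hypotheses alone. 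Once minimality is granted, \eqref{eq:otrax*} follows by passing to the limit $x\uparrow x^*$ in the non-strict version of \eqref{eq:defx*teo}, using lower semicontinuity of $g$ and monotone convergence of $\int_{(x,r]}\phia(y)\al g(y)\,m(dy)$ to $\int_{[x^*,r]}\phia(y)\al g(y)\,m(dy)$, and the bound $k\leq\al g(x^*)m(\{x^*\})$ is the algebraic restatement of \eqref{eq:otrax*} via the atom identity \eqref{eq:atom}.
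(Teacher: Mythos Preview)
Your treatment of the biconditional and of the Riesz representation is essentially the paper's: build the Green potential of the measure \eqref{eq:repMeasure}, verify via the RRC that it coincides with the candidate in \eqref{eq:VaCall}, and appeal to Dynkin's characterization in both directions. One caveat in the \emph{only if} direction: you deduce \eqref{eq:algmayor0} from ``uniqueness of the Riesz decomposition'', but that uniqueness concerns non-negative representing measures and does not by itself exclude a signed representation. The paper instead first invokes the general fact that $\al g\ge 0$ on the stopping region; this makes $\Wa(x)=\int_{(x^*,r]}\Ga(x,y)\al g(y)\,m(dy)$ genuinely $\alpha$-excessive with the evident Riesz representation, and then uniqueness (applied to $\Wa$, not to $\Va$) pins down the sign of $k$.

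The substantive gap is the minimality clause, exactly where you anticipate trouble. Your plan is to show that the competitor $V_{x'}$ is an $\alpha$-excessive majorant of $g$ and invoke Dynkin to force $\Va\le V_{x'}$. Excessivity of $V_{x'}$ does follow from the two hypotheses at $x'$, but the global majorant property does not: on $(\ell,x')$ you would need $\frac{g(x')}{\psia(x')}\psia(x)\ge g(x)$, which is \eqref{eq:mayorizq} with $x'$ in place of $x^*$ and is nowhere assumed. Worse, since $x'<x^*$ and \eqref{eq:mayorizq} holds at $x^*$, one has $g(x')/\psia(x')\le g(x^*)/\psia(x^*)$, hence $V_{x'}\le\Va$ on $(\ell,x']$---the wrong inequality for your purpose. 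The paper argues in the opposite direction: it computes $\Va(x^{**})-g(x^{**})$ directly from the already established Riesz representation of $\Va$ at threshold $x^*$ and the RRC identity for $g$ at $x^{**}$, obtaining
\[
\Va(x^{**})-g(x^{**})=-\int_{[\ell,x^*]}\Ga(x^{**},y)\,\nu_{\gi}(dy)+k\,\Ga(x^{**},x^*),
\]
rewrites $k$ via $k\,\Ga(x^*,x^*)=\int_{[\ell,x^*]}\Ga(x^*,y)\,\nu_{\gi}(dy)$, splits the integral at $x^{**}$, and groups the four resulting summands into two pairs: one strictly negative (this is where the strict inequality \eqref{eq:defx*teo} at $x^{**}$ enters) and one non-positive (this uses $\al g\ge 0$ on $(x^{**},x^*]$). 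Hence $\Va(x^{**})<g(x^{**})$, contradicting $\Va\ge g$. Once minimality is proved this way, your limit argument for \eqref{eq:otrax*} is fine.
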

\begin{remark}
From this theorem we obtain an algorithm to solve right-sided optimal stopping problems which works in most cases:
(i) Find the largest root $x^*$ of the equation 
\begin{equation}
\label{eq:defx*teoigual}
g(x^*)= \wa^{-1} \psia(x^*)\int_{(x^*,r]}\phia(y)\al g(y) m(dy);
\end{equation}
(ii) Verify $\al g(y)\geq 0$ for $x\geq x^*$;
(iii) Verify $g(x)\leq \frac{g(x^*)}{\psia(x^*)}\psia(x)$.
If these steps are fulfilled, the problem is right-sided with optimal threshold $x^*$. Observe that if $m(\{x^*\})=0$, then inequalities \eqref{eq:defx*teo} and \eqref{eq:otrax*} are equalities;

\end{remark}

\begin{proof}
We start by observing that if the problem is right-sided with threshold $x^*$ then \eqref{eq:algmayor0} holds. In general $\al g$ is non-negative in the stopping region (this can be seen with the help of the Dynkin's operator, see ex. 3.17 p. 310 in \cite{revuz}, see also equation (10.1.35) in \cite{oksendal}). Under the made assumption the value function $\Va$ is given by \eqref{eq:VaCall},
which implies \eqref{eq:mayorizq} since the value function dominates the reward. To finish the proof of the ``only-if'' part it remains to prove \eqref{eq:defx*teo}. Consider $\Wa\colon \I \mapsto \R$ defined by
\begin{equation*}
 \Wa(x):=\int_{(x^*,r]} \Ga(x,y) \al g(y) m(dy);
\end{equation*}
observe that $\Wa$ is $\alpha$-excessive in virtue of \eqref{eq:algmayor0} and Riesz's representation. Let $\Vat\colon \I \mapsto \R$ be defined by
\begin{equation*}
 \Vat(x):=\Wa(x)+k\Ga(x,x^*),
\end{equation*}
where $k$ is such that $\Vat(x^*)=g(x^*)$, i.e.
$k=\left(g(x^*)-\Wa(x^*)\right)/\Ga(x^*,x^*)$. Observe that, by \eqref{eq:Garepr},  $\Wa(x^*)$ is the right-hand side of \eqref{eq:defx*teo}. In fact, \eqref{eq:defx*teo} holds if and only if $k\geq 0$. By the definition of $\Vat$ and the representation \eqref{eq:Garepr} of $\Ga$ we get for $x\leq x^*$
\begin{equation*}
\Vat(x)=\frac{\psia(x)}{\psia(x^*)}\Vat(x^*)=\frac{\psia(x)}{\psia(x^*)}g(x^*)=\Va(x).
\end{equation*}
Let us compute $\Vat(x)-g(x)$ for $x \geq x^*$. In this region we have $g=\gi$, where $\gi$ is the extension given by the RRC. For $\gi$ can use the inversion formula \eqref{eq:invh}. Denoting by $\nu_{\gi}(dy)=\al \gi(y) m(dy)$ we have
\begin{equation*}
\aligned
\Vat(x)-g(x) &= k\Ga(x,x^*) -\int_{[\ell,x^*]} \Ga(x,y) \nu_{\gi}(dy)\\ 
&=k\wa^{-1} \phia(x)\psia(x^*) -\wa^{-1} \phia(x) \int_{[\ell,x^*]} \psia(y) \nu_{\gi}(dy)\\
&=\frac{\phia(x)}{\phia(x^*)} (\Vat(x^*)-g(x^*))=0,
\endaligned
\end{equation*}
because $\Vat(x^*)=g(x^*)$. So far, we have proved that $\Vat(x)=\Va(x)$ for all $x\in \I$. We are ready to prove that $k\geq 0$, based on the uniqueness of Riesz's decomposition: the $\alpha$-excessive function $\Wa$ has Riesz's representation given by its definition, and, if $k<0$ then 
$$\Wa(x)=-k \Ga(x,x^*) + \Va(x)$$
would give another Riesz's representation (the representing measure being $-k\delta_{\{x^*\}}(dx)+\mu(dx)$, where $\mu$ is the representing measure of $\Va$). An easy way of verifying that the measures are not the same is to observe that the former does not charge $\{x^*\}$, while the latter do.

To prove the ``if'' statement observe that, assuming \eqref{eq:defx*teo} \eqref{eq:algmayor0} and \eqref{eq:mayorizq}, function $\Vat$, already defined, is $\alpha$-excessive (by Riezs's representation, bearing in mind that $k\geq 0$) and dominates $g$. By Dynkin's characterization, the value function $\Va$ is the minimal $\alpha$-excessive function that dominates $g$. Therefore $\Vat \geq \Va$. Since $\Vat$ satisfies \eqref{eq:VaCall} (we have proved this in the first part of this proof), it follows that $\Vat$ is the expected reward associated with the hitting time of the set $[x^*,r]$, then 
$$\Vat(x) \leq \Va(x)=\sup_{\tau}\Ex{x}{\ea{\tau}g(X_{\tau})},$$
concluding that the problem is right-sided with threshold $x^*$.

The consideration about Riesz's representation of $\Va$ stated in the ``furthermore'' part are a direct consequence of the made proof.

To prove the minimality of $x^*$, 
suppose that there exists $x^{**}$ such that $x_1<x^{**}<x^*$ 
satisfying the strict inequality in \eqref{eq:defx*teo} and \eqref{eq:algmayor0}. 
Let us check $\Va(x^{**})-\g(x^{**})<0$, 
in contradiction with the fact that $\Va$ is a majorant of $g$. 
Considering the extension $\gi$ given by the RRC and denoting $\nu_{\gi}(dy)=\al \gi(y) m(dy)$ we have
\begin{equation*}
\Va(x^{**})-g(x^{**}) = -\int_{[\ell,x^*]}\Ga(x^{**},y)\nu_{\gi}(dy)+k \Ga(x^{**},x^*)=s_1+s_2+s_3+s_4,
\end{equation*}
where
\begin{align*}
s_1&=-\int_{[\ell,x^{**}]}\Ga(x^{**},y)\nu_{\gi}(dy),\\
s_2&=-\wa^{-1} \psia(x^{**}) \int_{(x^{**},x^*]} \phia(y) \nu_{\gi}(dy),\\
s_3&=\frac{\psia(x^{**})}{\psia(x^*)}\frac{\phia(x^*)}{\phia(x^{**})} \int_{[\ell,x^{**}]}\Ga(x^{**},y)\nu_{\gi}(dy),\\
s_4&= \frac{\psia(x^{**})}{\psia(x^*)} \int_{(x^{**},x^*]}\Ga(x^*,y)\nu_{\gi}(dy).
\end{align*}
To check that $s_3+s_4=k \Ga(x^{**},x^*)$, use
\begin{equation*}
 k=\frac{1}{\Ga(x^*,x^*)}\int_{[\ell,x^*]}\Ga(x^*,y)\nu_{\gi}(dy), \quad \mbox{and}\quad \frac{\Ga(x^{**},x^*)}{\Ga(x^*,x^*)}=\frac{\psia(x^{**})}{\psia(x^*)}.
\end{equation*}
%
Finally, observe that
\begin{equation*}
s_1+s_3= \left(\frac{\psia(x^{**})}{\psia(x^*)}\frac{\phia(x^*)}{\phia(x^{**})}-1\right) \int_{[\ell,x^{**}]}\Ga(x^{**},y)\nu_{\gi}(dy)< 0, 
\end{equation*}
because the first factor is negative and the second one positive, 
by the assumption about $x^{**}$, and
\begin{equation*} 
s_4+s_2=\frac{\wa^{-1} \psia(x^{**})}{\psia(x^*)} \int_{(x^{**},x^*]}\left(\psia(y)\phia(x^*)-\psia(x^*)\phia(y)\right)\nu_{\gi}(dy)<0,
\end{equation*}
because the measure $\nu_{\gi}(dy)$ is positive and the integrand non-positive (it is increasing and vanishes in $y=x^*$). We have obtained that 
$\Va(x^{**})-g(x^{**})=s_1+s_2+s_3+s_4 <0$, concluding the proof.
\end{proof}

The previous theorem gives necessary and sufficient conditions for the problem \eqref{eq:osp} to be right-sided under the RRC.
The following result gives simpler sufficient conditions for the problem to be right-sided.

\begin{theorem}
Consider a diffusion $X$ and a reward function $g$ such that \eqref{eq:invh} is fulfilled for $\gi=g$.
Suppose that there exists a root $c\in (\ell,r)$ of the equation $\al g(x)=0$,
such that $\al g(x)<0$ if $x<c$ and $\al g(x)>0$ if $x>c$,
 and that  $\int_{\I} \psia(y) \al g(y) m(dy)\in (0,\infty]$.
Then the optimal stopping problem \eqref{eq:osp} is right-sided, with optimal threshold 
\begin{equation}
\label{eq:xstarsuficiente}
x^*=\min\{x\colon b(x)\geq 0\} ,
\end{equation}
where
\begin{equation*}
b(x)=\int_{[\ell,x]} \psia(y) \al g(y) m(dy)\quad (x\in\I).
\end{equation*}
\end{theorem}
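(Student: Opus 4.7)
The plan is to reduce the statement to Theorem \ref{mainteo}: since the hypothesis \eqref{eq:invh} is satisfied by $\gi=g$ itself, $g$ satisfies the RRC for every $x_1\in\I$, and we only need to verify \eqref{eq:defx*teo}, \eqref{eq:algmayor0} and \eqref{eq:mayorizq} for the threshold $x^*$ defined by \eqref{eq:xstarsuficiente}. First I would check that $x^*$ is well defined and $x^*>c$. By the sign assumptions on $\al g$, the function $b$ is strictly decreasing on $[\ell,c]$ and strictly increasing on $[c,r]$; furthermore $\lim_{x\to\ell^+}b(x)\le 0$ and $\lim_{x\to r^-}b(x)=\int_{\I}\psia\,\al g\,dm>0$. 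Hence $b(c)<0$, $b$ crosses zero at a unique point strictly to the right of $c$, the set $\{x\colon b(x)\ge 0\}$ has a minimum $x^*>c$, and $b(y)<0$ for every $y\in(\ell,x^*)$ while $b(x^*)\ge 0$.

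Condition \eqref{eq:algmayor0} is then immediate, because $y>x^*>c$ forces $\al g(y)>0$. For \eqref{eq:defx*teo} I would evaluate the inversion formula \eqref{eq:invh} at $x=x^*$ and split using the representation \eqref{eq:Garepr}:
\[
g(x^*)=\wa^{-1}\phia(x^*)\,b(x^*)+\wa^{-1}\psia(x^*)\int_{(x^*,r]}\phia(y)\,\al g(y)\,m(dy).
\]
Since $\phia(x^*)>0$ and $b(x^*)\ge 0$, dropping the first term on the right yields \eqref{eq:defx*teo} directly.

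The main obstacle is \eqref{eq:mayorizq}: I have to show that $g(x)/\psia(x)\le g(x^*)/\psia(x^*)$ for every $x<x^*$. The same splitting of \eqref{eq:invh} gives
\[
\frac{g(x)}{\psia(x)}=\wa^{-1}\,\frac{\phia(x)}{\psia(x)}\,b(x)+\wa^{-1}\int_{(x,r]}\phia(y)\,\al g(y)\,m(dy).
\]
Writing $F=\phia/\psia$ and noting that $\phia(y)\,\al g(y)\,m(dy)=F(y)\,db(y)$ by the very definition of $b$, I would apply Lebesgue--Stieltjes integration by parts on $(x,x^*]$; this is legitimate because $F$ is continuous and therefore shares no atoms with $b$, so no extra boundary terms arise from jumps of the speed measure. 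The result is the compact identity
\[
\frac{g(x^*)}{\psia(x^*)}-\frac{g(x)}{\psia(x)}=\wa^{-1}\int_{(x,x^*]}b(y)\,dF(y).
\]
Since $F$ is strictly decreasing (ratio of a positive decreasing function and a positive increasing function), $dF$ is a negative measure, and $b(y)<0$ on $(\ell,x^*)$, so the integrand $b(y)\,dF(y)$ is a nonnegative measure and the right-hand side is $\ge 0$. This gives \eqref{eq:mayorizq}. With all three conditions of Theorem \ref{mainteo} verified, the conclusion follows.
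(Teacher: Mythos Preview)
Your argument is correct and follows the same overall plan as the paper: reduce to Theorem~\ref{mainteo} and verify \eqref{eq:defx*teo}, \eqref{eq:algmayor0} and \eqref{eq:mayorizq} for the threshold $x^*$ in \eqref{eq:xstarsuficiente}. Your treatment of the first two conditions is identical to the paper's (monotonicity of $b$, right-continuity giving $b(x^*)\ge0$, and the splitting of the inversion formula at $x^*$).

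The genuine difference is in the verification of \eqref{eq:mayorizq}. The paper does not integrate by parts; instead it introduces an auxiliary signed measure $\sigma_\ell$ supported on $[\ell,x^*]$, equal to $\al g(y)\,m(dy)$ on $[\ell,x^*)$ and adjusted at $x^*$ so that $\int_{[\ell,x^*]}\psia\,d\sigma_\ell=0$. Using \eqref{eq:invh} and \eqref{eq:Garepr} it rewrites $g(x)-\frac{\psia(x)}{\psia(x^*)}g(x^*)=\int_{[\ell,x^*]}\Ga(x,y)\,\sigma_\ell(dy)$ and then bounds this by $\frac{\Ga(x,c)}{\psia(c)}\int_{[\ell,x^*]}\psia\,d\sigma_\ell=0$, exploiting the pointwise comparison between $\Ga(x,y)$ and $\psia(y)\Ga(x,c)/\psia(c)$ on each side of $c$, matched to the sign of $\sigma_\ell$. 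Your route is more direct: the identity
\[
\frac{g(x^*)}{\psia(x^*)}-\frac{g(x)}{\psia(x)}=\wa^{-1}\int_{(x,x^*]} b(y)\,dF(y),\qquad F=\phia/\psia,
\]
obtained by Lebesgue--Stieltjes integration by parts (legitimate since $F$ is continuous, hence shares no atoms with $b$), reduces \eqref{eq:mayorizq} to the single observation that $b<0$ on $(\ell,x^*)$ and $dF\le 0$. This avoids the auxiliary measure $\sigma_\ell$ and the case analysis at $c$; on the other hand, the paper's comparison argument is perhaps more transparent about where the single sign change of $\al g$ enters.
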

\begin{proof}
The idea is to apply Theorem \ref{mainteo}, 
with $x^*$ defined in \eqref{eq:xstarsuficiente}. By the assumptions on $\al g$ and the fact that $m(dy)$ is strictly positive in any open set,
 we obtain that the function $b(x)$ is decreasing in $[\ell,c)$ and increasing in $(c,r)$. 
 Moreover $b(x)<0$ if $\ell<x\leq c$. 
 Since $b$ is right continuous and increasing in $(c,r)$, 
 the set $\{x\colon b(x)\geq 0\}=[x^*,r)$ 
 with $x^*> c$. 
 Observe that, by \eqref{eq:invh} and \eqref{eq:Garepr} we get
\begin{equation*}
 g(x)=\wa^{-1}\phia(x)b(x)+\int_{(x,r]}\Ga(x,y)\al g(y)m(dy),
\end{equation*}
and $b(x^*)\geq 0$ is equivalent to \eqref{eq:defx*teo}. Since $x^*\geq c$ we have $\al g(y)>0$ for $x>x^*$. 
It only remains to verify \eqref{eq:mayorizq}. 
By definition of $x^*$ there exists a signed measure $\sigma_{\ell}(dy)$ whose support is contained in 
$[\ell,x^*]$, and $\sigma_{\ell}(dy)=\al g(y) m(dy)$ for $y<x^*$ and such that
\begin{equation*}
\int_{[\ell,x^*]}\psia(y)\sigma_{\ell}(dy)=0.
\end{equation*}
Furthermore $\sigma_r(dy)=\al g(y)m(dy)-\sigma_{\ell}(dy)$ is a positive measure supported in $[x^*,r]$. 
Using the inversion formula for $g$ and \eqref{eq:Garepr}, we have for $x<x^*$
\begin{equation*}
 g(x)-\frac{\psia(x)}{\psia(x^*)}g(x^*)=\int_{[\ell,x^*]} \Ga(x,y) \sigma_{\ell}(dy)\leq \frac{\Ga(x,c)}{\psia(c)} \int_{[\ell,x^*]}\psia(y)  \sigma_{\ell}(dy) =0,
\end{equation*}
where the inequality follows from the following facts:
if $y<c$ then $\sigma_{\ell}(dy)\leq 0$ and
\begin{equation*}
\psia(y) \frac{\Ga(x,c)}{\psia(c)} \leq \Ga(x,y),
\end{equation*}
while if $y>c$ then $\sigma_{\ell}(dy)\geq 0$
\begin{equation*}
\psia(y) \frac{\Ga(x,c)}{\psia(c)} \geq \Ga(x,y).
\end{equation*}
We can now apply Theorem \ref{mainteo} completing the proof.
\end{proof}

\subsection{On the right regularity condition (RRC)}
In order to apply the previous results it is necessary to verify the inversion formula \eqref{eq:invh}. 
As we have seen in the preliminaries, 
if $f\in \D_L$ we have $\Ra \al f = f$, and if equation \eqref{eq:Ra=Ga} holds for $\al f$, 
we have \eqref{eq:invh}. 
This is the content of the following result.
\begin{lemma}
 Assume $f\in \D_L$, and 
\begin{equation*}
 \int_{\I} \Ga(x,y) |\al f(y)| m(dy) < \infty \quad \text{for all $x\in \I$}.
\end{equation*}
Then $f$ satisfies equation \eqref{eq:invh}.
\end{lemma}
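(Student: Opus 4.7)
The plan is to observe that this lemma is essentially a chaining of two facts already stated in the preliminaries: the inverse relation $\Ra \al f = f$ valid for every $f\in\D_L$, and the representation \eqref{eq:Ra=Ga} of $\Ra$ as an integral against the Green function $\Ga$, which holds as soon as the underlying absolute integrability condition is met. The hypothesis $\int_\I \Ga(x,y)|\al f(y)| m(dy)<\infty$ is tailored precisely to let us apply the second identity to the (possibly unbounded) function $u=\al f$.

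Concretely, I would start from the probabilistic definition
$$\Ra(\al f)(x) = \int_0^\infty \ea{t}\, \Ex{x}{\al f(X_t)}\, dt,$$
expand the expectation using the transition density with respect to the speed measure, $\Ex{x}{\al f(X_t)}=\int_\I p(t;x,y)\,\al f(y)\,m(dy)$, and then invoke Fubini's theorem to interchange the order of integration in $t$ and $y$. The hypothesis supplies absolute integrability, since by the definition of $\Ga$ we have
$$\int_0^\infty\!\!\int_\I \ea{t} p(t;x,y)\,|\al f(y)|\,m(dy)\,dt \;=\; \int_\I \Ga(x,y)\,|\al f(y)|\,m(dy) \;<\;\infty.$$
After the interchange, the inner $t$-integral is exactly $\Ga(x,y)$, yielding $\Ra(\al f)(x)=\int_\I \Ga(x,y)\,\al f(y)\,m(dy)$. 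Combining this with the identity $\Ra\al f=f$ for $f\in\D_L$ gives $f(x)=\int_\I \Ga(x,y)\,\al f(y)\,m(dy)$, which is \eqref{eq:invh} with $\gi=f$.

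There is no real obstacle beyond checking the Fubini hypothesis; the point of the statement is simply to isolate the condition under which the semigroup form of the resolvent coincides with its Green-kernel form on the function $\al f$, so that the already-available inversion identity $\Ra\al f=f$ can be transcribed into the integral form required to verify the RRC in applications.
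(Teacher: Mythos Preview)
Your argument is correct and is exactly the one the paper intends: it explicitly says, just before stating the lemma, that the result follows from combining $\Ra\al f=f$ for $f\in\D_L$ with the Fubini-based identity \eqref{eq:Ra=Ga}, and no separate proof is given. Your write-up simply spells out that Fubini step in detail.
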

The conditions of the previous lemma are very restrictive in order to solve concrete problems, 
as reward functions typically satisfy $\lim_{x\to r}g(x)=\infty$. 
The following result extends the previous one to unbounded functions.

\begin{proposition} \label{propInversion2} Consider the case $r\notin \I$.
Suppose $u\colon\I\mapsto \R$ is such that $Lu(x)$ in \eqref{eq:difop} can be defined for all $x\in\I$. 
Assume
\begin{equation}\label{eq:alIntegrable}
 \int_{\I}\Ga(x,y) |\al u(y)| m(dy)<\infty,
\end{equation}
and
\begin{equation} \label{eq:gOverPsi}
 \lim_{z\to r^-} \frac{u(z)}{\psia(z)}=0.
\end{equation}
Suppose also that for each $y \in \I$ there exist a function $u_y \in \D_L$ such that $u_y(x)=u(x)$ for $x\leq y$. 
Then $u$ satisfies \eqref{eq:invh}.
\end{proposition}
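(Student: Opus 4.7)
The plan is to apply the discounted Dynkin formula \eqref{eq:dynkinFormula} to the family of approximating functions $u_n \in \D_L$ (where $u_n = u$ on $[\ell,n]$) with the hitting time $\hit{n}$ as the stopping time, and then to let $n \to r^-$. Because $r \notin \I$ and the diffusion is non-terminating and regular, by path continuity the process cannot reach $r$ in finite time, so $\hit{n} \uparrow \infty$ almost surely as $n \uparrow r$; this is what allows the local truncation to yield an identity valid on all of $\I$.

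Concretely, fix $x \in \I$ and choose $n \in \I$ with $n > x$. Since $u_n \in \D_L$, formula \eqref{eq:dynkinFormula} gives
\begin{equation*}
u_n(x) = \Ex{x}{\int_0^{\hit{n}} \ea{t} \al u_n(X_t)\, dt} + \Ex{x}{\ea{\hit{n}} u_n(X_{\hit{n}})}.
\end{equation*}
The locality of $L$ as expressed through \eqref{eq:difop}, together with $u_n = u$ on $[\ell, n]$, gives $\al u_n(z) = \al u(z)$ for $z < n$, and path continuity gives $X_t < n$ for $t < \hit{n}$, so the integrand coincides with $\ea{t}\al u(X_t)$ up to the negligible single instant $t = \hit{n}$. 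Using \eqref{eq:hitting} for the boundary term, $\Ex{x}{\ea{\hit{n}}} = \psia(x)/\psia(n)$, and noting $u_n(x) = u(x)$ and $u_n(X_{\hit{n}}) = u(n)$ on $\{\hit{n}<\infty\}$, we arrive at
\begin{equation*}
u(x) = \Ex{x}{\int_0^{\hit{n}} \ea{t} \al u(X_t)\, dt} + \frac{\psia(x)}{\psia(n)}\, u(n).
\end{equation*}

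Letting $n \to r^-$, the boundary term vanishes by hypothesis \eqref{eq:gOverPsi}. For the integral, $\hit{n} \uparrow \infty$ almost surely, and by \eqref{eq:Ra=Ga} and \eqref{eq:alIntegrable} one has
\begin{equation*}
\Ex{x}{\int_0^\infty \ea{t}\,|\al u(X_t)|\, dt} = \int_{\I} \Ga(x,y)\, |\al u(y)|\, m(dy) < \infty,
\end{equation*}
providing an integrable dominating function. Dominated convergence then yields
\begin{equation*}
\Ex{x}{\int_0^{\hit{n}} \ea{t} \al u(X_t)\, dt} \longrightarrow \Ra(\al u)(x) = \int_{\I} \Ga(x,y) \al u(y)\, m(dy),
\end{equation*}
so $u$ indeed satisfies \eqref{eq:invh}. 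The only delicate step is the localization argument identifying $\al u_n$ with $\al u$ on the stopped path; this is precisely why one must require a globally $\D_L$-defined extension of $u|_{[\ell,n]}$ for every $n$, and it is a direct consequence of the local/differential nature of $L$ given in \eqref{eq:difop}. The remaining ingredients --- finite resolvent mass via \eqref{eq:alIntegrable}, decay at $r$ via \eqref{eq:gOverPsi}, and $\hit{n} \uparrow \infty$ --- combine into a routine dominated-convergence passage to the limit.
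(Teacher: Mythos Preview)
Your proof is correct and follows essentially the same route as the paper: apply the discounted Dynkin formula \eqref{eq:dynkinFormula} to a $\D_L$-extension of $u$ with the hitting time of an increasing level, then pass to the limit using \eqref{eq:gOverPsi} for the boundary term and dominated convergence (justified by \eqref{eq:alIntegrable}) for the integral. The only cosmetic difference is that the paper stops at $r_n$ while using the extension $u_{r_{n+1}}$ (so that $\al u_n=\al u$ holds on the whole closed interval $[\ell,r_n]$), whereas you use the same level for the extension and the stopping and handle the endpoint via the Lebesgue-null instant $t=\hit{n}$; both are fine.
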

\begin{proof}
By \eqref{eq:Ra=Ga} we have $\int_{\I} \Ga(x,y) \al u(y) m(dy)=\Ra \al u(x)$. 
Consider a strictly increasing sequence $r_n\to r\ (n\to\infty)$ 
and denote by $u_n$ the function $u_{r_{n+1}} \in \D_L$ of the hypothesis. 
By the continuity of the sample paths, by our assumptions on the right boundary $r$, we have $\hit{r_n}\to \infty\quad (n\to\infty)$. 
Applying formula \eqref{eq:dynkinFormula} to $u_n$ and the stopping time $\hit{r_n}$ we obtain, 
for $x<r_n$,
\begin{equation*}
u_n(x)=\Ex{x}{\int_0^{\hit{r_n}} \ea{t} \al u_n(X_t) dt}+ \Ex{x}{\ea{\hit{r_n}}}u_n(r_n),
\end{equation*}
using $u_n(x)=u(x)$ and $\al u(x)=\al u_n(x)$ for $x<r_{n+1}$ we have
\begin{equation*}
u(x)=\Ex{x}{\int_0^{\hit{r_n}} \ea{t} \al u(X_t) dt}+ \Ex{x}{\ea{\hit{r_n}}}u(r_n).
\end{equation*}
Taking limits when $n\to \infty$, by  \eqref{eq:hitting} and \eqref{eq:gOverPsi} we have
\begin{equation*}
 \Ex{x}{ \ea{\hit{r_n}}}u(r_n) = \frac{\psia(x)}{\psia(r_n)}u(r_n)\to 0.
\end{equation*}
To compute the limit of the first term above we use dominated convergence theorem and \eqref{eq:alIntegrable}.
The result is
\begin{align*}
 u(x)&=\int_0^\infty \Ex{x}{ \ea{t} \al u(X_t) }dt
= \int_{\I} \Ga(x,y) \al u(y) m(dy)
\end{align*}
concluding the proof.
\end{proof}

\section{On the principle of smooth fit}\label{section:sf}

The principle of \emph{smooth fit}  (\sfp) holds when condition $V'(x^*)=g'(x^*)$ is satisfied, 
being a helpful tool to find candidate solutions to optimal stopping problems.
In \cite{salminen85} Salminen proposes an alternative version of this principle, considering derivatives with respect to the scale function. 
We say that there is \emph{scale smooth fit} (\ssfp) when the value function has derivative at $x^*$ 
with respect to the scale function. 
Note that if $g$ also has derivative with respect to the scale function they coincide, since $g=\Va$ in $[x^*,r]$. 
In \cite{peskir07} Peskir presents two interesting examples: one of them consists on the optimal stopping problem of a regular diffusion with a differentiable payoff function in which the principle of \sfp\ does not hold, but the alternative principle of \ssfp\ does; while in the other the principle of \sfp\ holds but the principle of \ssfp\ fails. Later, Samee \cite{samee2010principle} 
analysed the validity of the principle of smooth fit for killed diffusions and introduced other alternative principles considering derivatives of $\frac{g}{\psia}$ and $\frac{g}{\phia}$ with respect to the scale function $s$. See also the paper by Jacka \cite{jacka1993local} for a study of the principle of smooth fit related to the Snell envelope.

We now  analyse the relation between Riez's representation of $\Va$, stated in the previous section, and the principle of smooth fit. We start by proving that  $k=\nu(\{x^*\})=0$ in \eqref{eq:repMeasure} implies that the reward function has derivatives with respect to the function $\psia$. Then we follow by stating some corollary results.

\begin{theorem} \label{teosf}
Given a diffusion $X$ and a reward function $g$, if the value function associated with the problem \eqref{eq:osp} satisfies 
\begin{equation*}
 \Va(x)=\int_{(x^*,r]}\Ga(x,y) \al g(y) m(dy),
\end{equation*}
for $x^*\in (\ell,r)$, then $\Va$ is differentiable at $x^*$ with respect to $\psia$. 
\end{theorem}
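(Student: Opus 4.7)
The plan is to compute the one-sided $\psia$-derivatives of $\Va$ at $x^*$ directly from the explicit Green-function formula and show they coincide. Write $\mu(dy)=\al g(y)\,m(dy)$ and $C=\wa^{-1}\int_{(x^*,r]}\phia(y)\,\mu(dy)$. The left side is essentially free: for $x\leq x^*$ every $y\in(x^*,r]$ satisfies $y\geq x$, hence the first branch of \eqref{eq:Garepr} yields $\Va(x)=C\psia(x)$ identically on $(\ell,x^*]$, so the left difference quotient $(\Va(x)-\Va(x^*))/(\psia(x)-\psia(x^*))$ equals $C$ for every $x<x^*$, giving the left $\psia$-derivative at once.

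For $x>x^*$, I would split the integration domain at $x$ and use both branches of $\Ga$, obtaining $\Va(x)=\wa^{-1}\phia(x)A(x)+\wa^{-1}\psia(x)B(x)$, where $A(x)=\int_{(x^*,x]}\psia\,d\mu$ and $B(x)=\int_{(x,r]}\phia\,d\mu=\wa C-\tilde A(x)$, with $\tilde A(x)=\int_{(x^*,x]}\phia\,d\mu$. Subtracting $\Va(x^*)=C\psia(x^*)$ and rearranging yields
\begin{equation*}
\frac{\Va(x)-\Va(x^*)}{\psia(x)-\psia(x^*)}=C+\frac{\wa^{-1}\int_{(x^*,x]}[\phia(x)\psia(y)-\psia(x)\phia(y)]\,\mu(dy)}{\psia(x)-\psia(x^*)}=:C+E(x),
\end{equation*}
so the remaining task is to show $E(x)\to 0$ as $x\downarrow x^*$.

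To handle $E(x)$ I would centre the integrand at $x^*$, writing
\begin{equation*}
\phia(x)[\psia(y)-\psia(x^*)]-\psia(x)[\phia(y)-\phia(x^*)]+\bigl[\phia(x)\psia(x^*)-\psia(x)\phia(x^*)\bigr].
\end{equation*}
For $y\in(x^*,x]$, monotonicity of $\psia$ (increasing) and $\phia$ (decreasing) bounds the first two brackets in absolute value by $\psia(x)-\psia(x^*)$ and $\phia(x^*)-\phia(x)$ respectively, while the constant last bracket equals $\psia(x^*)[\phia(x)-\phia(x^*)]-\phia(x^*)[\psia(x)-\psia(x^*)]$. After dividing by $\psia(x)-\psia(x^*)$, each resulting coefficient stays bounded as $x\downarrow x^*$: this uses the finiteness of the one-sided scale derivatives $\partial^+\psia/\partial s(x^*)$ and $\partial^+\phia/\partial s(x^*)$ together with the strict positivity of the former (both established in Section~\ref{section:definitions}), which make the ratio $(\phia(x^*)-\phia(x))/(\psia(x)-\psia(x^*))$ convergent to a finite limit. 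What remains is a bounded factor times $\mu((x^*,x])$, and $\mu((x^*,x])\to 0$ by continuity from above of $\mu$ (locally finite near $x^*$ since $\Va(x^*)<\infty$). The main subtlety is precisely this cancellation: $\psia(x)-\psia(x^*)$ and $\phia(x^*)-\phia(x)$ are of the \emph{same} order near $x^*$, so naive estimates of the two brackets separately leave an irreducible $O(1)$ contribution; one must exploit the structural cancellation inside $\phia(x)\psia(y)-\psia(x)\phia(y)$ to collapse $E(x)$ onto the vanishing factor $\mu((x^*,x])$.
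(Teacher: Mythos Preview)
Your argument is correct and follows essentially the same route as the paper's proof: compute the left $\psia$-derivative directly from $\Va=C\psia$ on $(\ell,x^*]$, then for $x>x^*$ split the Green integral at $x$, isolate an error term of the form $\int_{(x^*,x]}[\phia(x)\psia(y)-(\text{const})\phia(y)]\,\mu(dy)\big/(\psia(x)-\psia(x^*))$, bound the integrand uniformly in $y$ via monotonicity of $\psia,\phia$, control the resulting ratio $(\phia(x)-\phia(x^*))/(\psia(x)-\psia(x^*))$ through the finite one-sided scale derivatives, and conclude by $\mu((x^*,x])\to 0$. The only cosmetic difference is that the paper groups so the integrand carries $\psia(x^*)\phia(y)$ (yielding a two-term decomposition and a main term $\int_{[x,r]}\phia\,d\mu$ that converges to $C$), whereas you group with $\psia(x)\phia(y)$ (so $C$ appears exactly and the remainder splits into three pieces); the estimates and the underlying mechanism are identical.
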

\begin{proof}
For $x\leq x^*$
\begin{equation*}
\Va(x)=\wa^{-1} \psia(x) \int_{(x^*,r]}\phia(y) \nu_g(dy), 
\end{equation*}
and the left derivative of $\Va$ with respect to $\psia$ in $x^*$ is
\begin{equation*}
\frac{\partial \Va^-}{\partial \psia}(x^*)=\wa^{-1} \int_{(x^*,r]}\phia(y) \nu_g(dy). 
\end{equation*}
For $x>x^*$
\begin{equation*}
\Va(x)=\phia(x) \wa^{-1}\int_{(x^*,x)}\psia(y)\nu_g(dy)+\psia(x) \wa^{-1}\int_{[x,r]}\phia(y)\nu_g(dy).
\end{equation*}
Computing the difference between $\Va(x)$ and $\Va(x^*)$ we obtain
\begin{align*} \Va(x)-\Va(x^*)& =\wa^{-1} \left(\psia(x)-\psia(x^*)\right) \int_{[x,r]}\phia(y)\nu_g(dy)\\
&\qquad +\wa^{-1}\int_{(x^*,x)}
\left(\phia(x) \psia(y)-\psia(x^*)\phia(y)\right) \nu_g(dy).
\end{align*}
Then
\begin{align*}
 \frac{\partial \Va^+}{\partial \psia}(x^*)&= \lim_{x \to {x^*}^+} \frac{\Va(x)-\Va(x^*)}{\psia(x)-\psia(x^*)} \\
&= \wa^{-1} \lim_{x \to {x^*}^+} \int_{[x,r]}\phia(y)\nu_g(dy) \\
& \qquad + \wa^{-1} \lim_{x \to {x^*}^+} \frac{\int_{(x^*,x)}\phia(x)\psia(y)-\psia(x^*)\phia(y)\nu_g(dy)}{\psia(x)-\psia(x^*)}.
\end{align*}
If the last limit vanishes, we obtain that the right derivative
exists, and
\begin{equation*}
\frac{\partial \Va^+}{\partial \psia}(x^*)=\frac{\partial \Va^-}{\partial \psia}(x^*)=\wa^{-1} \int_{(x,r]}\phia(y)\nu_g(dy).
\end{equation*}
This means that we have to prove 
\begin{equation}\label{eq:integral0}
\lim_{x \to {x^*}^+} \int_{(x^*,x)}\frac{\phia(x)\psia(y)-\psia(x^*)\phia(y)}{\psia(x)-\psia(x^*)}\nu_g(dy)=0.
\end{equation}
Denoting by $f(y)$ the numerator of the integrand in \eqref{eq:integral0}, observe that
\begin{equation*}
 f(y)=\phia(x)(\psia(y)-\psia(x^*))+\psia(x^*)(\phia(x)-\phia(y)).
\end{equation*}
For the first term, we have (observe that $x^* < y < x$)
\begin{equation*}
0\leq \phia(x)(\psia(y)-\psia(x^*))\leq \phia(x)(\psia(x)-\psia(x^*)),
\end{equation*}
while for the second
\begin{equation*}
0\geq \psia(x^*)(\phia(x)-\phia(y)) \geq \psia(x^*)(\phia(x)-\phia(x^*)).
\end{equation*}
We conclude that
\begin{equation*}
\psia(x^*)(\phia(x)-\phia(x^*))\leq f(y) \leq \phia(x)(\psia(x)-\psia(x^*)).
\end{equation*}
Dividing by $\psia(x)-\psia(x^*)$ we see that the integrand has a lower bound $b(x)$ given by
\begin{equation*}
b(x):=\psia(x^*)\frac{\phia(x)-\phia(x^*)}{\psia(x)-\psia(x^*)} = \psia(x^*)\frac{\phia(x)-\phia(x^*)}{s(x)-s(x^*)} \frac{s(x)-s(x^*)}{\psia(x)-\psia(x^*)},
\end{equation*}
while $\phia(x)$ is an upper bound. 
We obtain the integral in \eqref{eq:integral0} satisfies
\begin{equation*}
 b(x)\nu_g(x^*,x)\leq  \int_{(x^*,x)}\frac{\phia(x)\psia(y)-\psia(x^*)\phia(y)}{\psia(x)-\psia(x^*)}\nu_g(dy) \leq \phia(x)\nu_g(x^*,x).
\end{equation*}
Taking limits when $x \to {x^*}^+$ we obtain $\phia(x)\to \phia(x^*)$, $\nu_g(x,x^*)\to 0$, and
\begin{equation*}
 \lim_{x\to {x^*}^+}b(x)=\psia(x^*)\frac{\partial \phia^+}{\partial s}(x^*)\left/ \frac{\partial \psia^+}{\partial s}(x^*)\right.,
\end{equation*}
concluding that \eqref{eq:integral0} holds.
\end{proof}

As we have seen in section \ref{section:main},
if the speed measure does not charge $x^*$ neither does the representing measure. This means that if representation \eqref{eq:repMeasure} holds, then $m(\{x^*\})=0$ is enough to guarantee the differentiability of $\Va$ with respect to $\psia$. 
We also have the following result.
\begin{corollary}
\label{scalesf}
Assume the conditions of Theorem \ref{teosf}.  If the speed measure does not charge $x^*$ there is scale smooth fit.
\end{corollary}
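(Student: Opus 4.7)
The plan is to reduce scale smooth fit to the $\psia$-smooth fit already provided by Theorem \ref{teosf} via a chain-rule type factorization, using the hypothesis $m(\{x^*\})=0$ only to ensure that $\psia$ itself is $s$-differentiable at $x^*$.

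First, by Theorem \ref{teosf}, the value function $\Va$ admits a two-sided derivative with respect to $\psia$ at $x^*$; denote its common value by $D := \wa^{-1}\int_{(x^*,r]}\phia(y)\nu_g(dy)$. Second, I would recall from the preliminaries that $\psia$ satisfies the jump formula \eqref{eq:atom}, namely
\begin{equation*}
\frac{\partial^+ \psia}{\partial s}(x^*) - \frac{\partial^- \psia}{\partial s}(x^*) = m(\{x^*\})\, L\psia(x^*),
\end{equation*}
so the assumption $m(\{x^*\})=0$ immediately forces the one-sided $s$-derivatives of $\psia$ at $x^*$ to coincide, giving a well-defined (finite, strictly positive) value $\psia'_s(x^*) := \frac{\partial \psia}{\partial s}(x^*)$.

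Next, for any $x\neq x^*$ I would write the identity
\begin{equation*}
\frac{\Va(x)-\Va(x^*)}{s(x)-s(x^*)} = \frac{\Va(x)-\Va(x^*)}{\psia(x)-\psia(x^*)}\cdot\frac{\psia(x)-\psia(x^*)}{s(x)-s(x^*)},
\end{equation*}
which is legitimate because $\psia$ and $s$ are strictly increasing, so both denominators are nonzero. Letting $x\to x^{*-}$ the first factor tends to $D$ (by the existence of the left $\psia$-derivative) and the second to $\frac{\partial^- \psia}{\partial s}(x^*)$; letting $x\to x^{*+}$ yields $D\cdot \frac{\partial^+ \psia}{\partial s}(x^*)$. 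By the previous step both limits equal $D\cdot \psia'_s(x^*)$, so the one-sided $s$-derivatives of $\Va$ at $x^*$ exist and agree, which is precisely scale smooth fit.

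No real obstacle is expected: the only subtle point is to recognize that the statement on $\psia$ not necessarily being in $\D_L$ does not prevent the use of \eqref{eq:atom}, which was explicitly noted to hold for $\phia$ and $\psia$ in the preliminaries. The argument does not require $g$ itself to be $s$-differentiable at $x^*$, since $\Va$ coincides with a multiple of $\psia$ to the left and equals $g$ to the right, and the whole computation is carried out on $\Va$.
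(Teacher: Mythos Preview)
Your proposal is correct and follows essentially the same route as the paper: invoke Theorem \ref{teosf} for $\psia$-differentiability of $\Va$, use \eqref{eq:atom} together with $m(\{x^*\})=0$ to obtain $s$-differentiability of $\psia$ at $x^*$, and conclude by a chain-rule factorization. Your write-up is in fact more careful than the paper's terse version, spelling out the one-sided limits explicitly.
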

\begin{proof}
By the previous theorem we know that $\Va$ is differentiable with respect to $\psia$. 
Condition $m(\{x^*\})=0$ implies that $\psia$ has derivative with respect to the scale function. We conclude
\begin{equation*}
 \frac{\partial \Va}{\partial s}(x^*)=\frac{\partial \Va}{\partial \psia}(x^*) \left/\frac{\partial \psia}{\partial s}(x^*)\right.
\end{equation*}
\end{proof}
The previous result, under the additional assumption that $\psia$ and $\phia$ are differentiable with respect to $s$, could be derived from Corollary 3.7 in \cite{salminen85}. This result states that the representing measure of $\Va$ does not charge $x^*$ if and only if $\Va$ is differentiable with respect to $s$. Also Theorem \ref{teosf} can be derived from the mentioned result under the additional assumption by using the chain rule.

As a consequence of the previous results, we obtain, by using the chain rule, conditions under which the principle of \sfp\ holds. 
\begin{corollary}
 \label{clasicsf} 
Assume that $g$ is differentiable at $x^*$. Under the conditions of Theorem \ref{teosf}, if $\psia$ is differentiable at $x^*$ and $\psia'(x^*)\neq 0$ (or under the conditions of Corollary \ref{scalesf}, if $s$ is differentiable at $x^*$ and $s'(x^*)\neq 0$) then the principle of \sfp\ holds.

\end{corollary}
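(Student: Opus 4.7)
The plan is to deduce the classical smooth fit equation $\Va'(x^*)=g'(x^*)$ from the $\psia$-smoothness already guaranteed by Theorem \ref{teosf} (respectively the scale smoothness from Corollary \ref{scalesf}) by a direct application of the chain rule, exploiting that $\Va$ coincides with $g$ on $[x^*,r]$ and is a constant multiple of $\psia$ on $[\ell,x^*]$ via \eqref{eq:VaCall}.

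First, I would invoke Theorem \ref{teosf} to conclude that $\Va$ is differentiable at $x^*$ with respect to $\psia$, and call the common two-sided value $D$. From \eqref{eq:VaCall}, on $(\ell,x^*)$ we have $\Va(x)=\tfrac{g(x^*)}{\psia(x^*)}\psia(x)$, so the left derivative with respect to $\psia$ at $x^*$ is the constant $\tfrac{g(x^*)}{\psia(x^*)}$, giving $D=\tfrac{g(x^*)}{\psia(x^*)}$. On $[x^*,r]$ we have $\Va=g$, so the right derivative of $\Va$ with respect to $\psia$ at $x^*$ equals the right derivative of $g$ with respect to $\psia$ at $x^*$. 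Using that $g$ and $\psia$ are classically differentiable at $x^*$ with $\psia'(x^*)\neq 0$, the standard ratio manipulation yields
\begin{equation*}
\lim_{x\downarrow x^*}\frac{g(x)-g(x^*)}{\psia(x)-\psia(x^*)}=\frac{g'(x^*)}{\psia'(x^*)},
\end{equation*}
so $D\,\psia'(x^*)=g'(x^*)$.

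Next, I would compute the two one-sided classical derivatives of $\Va$ at $x^*$ and match them. From the right, $\Va=g$ immediately gives $\Va'(x^{*+})=g'(x^*)$. From the left, differentiating $\Va(x)=\tfrac{g(x^*)}{\psia(x^*)}\psia(x)$ (using that $\psia$ is classically differentiable at $x^*$) gives $\Va'(x^{*-})=\tfrac{g(x^*)}{\psia(x^*)}\psia'(x^*)=D\,\psia'(x^*)$. Combined with the relation $D\,\psia'(x^*)=g'(x^*)$ from the previous step, the two one-sided classical derivatives agree, so $\Va$ is differentiable at $x^*$ in the classical sense and $\Va'(x^*)=g'(x^*)$, which is precisely the smooth fit condition.

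For the parallel statement under Corollary \ref{scalesf}, one replaces $\psia$ by the scale function $s$ throughout: Corollary \ref{scalesf} supplies scale differentiability of $\Va$ at $x^*$, and the hypotheses $s$ differentiable at $x^*$ with $s'(x^*)\neq 0$ let the same chain rule manipulation go through. I do not anticipate any serious obstacle here: once Theorem \ref{teosf} (or Corollary \ref{scalesf}) is in hand, smooth fit reduces to a chain rule computation, and the one-sided nature of the derivatives is harmless because $\Va$ is given by two explicit closed-form expressions joining at $x^*$ for which classical one-sided derivatives are immediate.
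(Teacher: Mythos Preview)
Your proposal is correct and follows essentially the same approach as the paper, which simply states that the result follows ``by using the chain rule'' without spelling out the details. Your argument is a faithful and careful elaboration of precisely that chain rule computation, exploiting the explicit form \eqref{eq:VaCall} of $\Va$ on each side of $x^*$.
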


The previous result is closely related with Theorem 2.3 in \cite{peskir07}, which states that, in the non-discounted problem, there is smooth fit if the reward and the scale function are differentiable at $x^*$. Theorem 2.3 in \cite{samee2010principle} ensures the validity of the smooth fit principle for the discounted problem under the assumption that $g$, $\psia$ and $\phia$ are differentiable at $x^*$. It should be noticed that these results are valid in general, not only in one-sided problems.

\section{Examples}\label{section:examples}

In this section we show how to solve some optimal stopping problems using the previous results. 
We present two classical examples (American and Russian options), 
and also include some new examples in which the smooth fit principle is not useful to find the solution.

\subsection{American call options}

Consider  a geometric Brownian motion given by 
$
X_t=x\exp(\sigma W_t+(\mu-\sigma^2/2)t)
$,
where $\{W_t\}$ is a standard Brownian motion, $\mu\in \R$ and $\sigma^2>0$.
The state space is $I=(0,\infty)$.  We refer to \cite{borodin}, p. 132 for the basic characteristics of this process.
 The infinitesimal generator is  $Lf=\frac{1}{2}\sigma^2x^2 f''+ \mu x f',$ with domain 
 \begin{equation*}
\D_L=\{f: f,\ Lf \in \CC_b(\I)\}. 
\end{equation*} 
Consider the payoff function $g(x)=(x-K)^+\ (x\in\R)$,
where $K$ is a positive constant, and a positive discount factor $\alpha$ satisfying $\alpha>\mu$. 
The reward function $g$ satisfies the RRC for $x_1=K$: 
it is enough to consider $\gi\in\CC^2$, bounded in $(0,K)$ and such that $\gi(x)=x-K$ for $x\geq K$. 
This function $\gi$ satisfies the inversion formula \eqref{eq:invh}
as a consequence of Proposition \ref{propInversion2}. 
Observe that equation \eqref{eq:alIntegrable} holds. Equation \eqref{eq:gOverPsi} is in this case
\begin{equation*}
 \lim_{z\to \infty} \frac{\gi(z)}{\psia(z)} = \lim_{z\to \infty} z^{1-\gamma_1},
\end{equation*}
with 
\begin{equation*}
\psia(x)=x^{\gamma_1}, \text{ and }\gamma_1=\frac{1}{2}-\frac{\mu}{\sigma^2}{+\sqrt{\left(\frac{1}{2}-\frac{\mu}{\sigma^2}\right)^2+ \frac{2\alpha}{\sigma^2}}}.
\end{equation*}
The last limit vanishes if $1-\gamma_1<0$, which is equivalent to $\mu < \alpha$.
To find $x^*$ we solve equation \eqref{eq:defx*teoigual}. 
After computations, we find 
\begin{equation*}
x^*=K \left(\frac{\gamma_1}{\gamma_1-1}\right).
\end{equation*}
It is not difficult to verify \eqref{eq:algmayor0} and \eqref{eq:mayorizq} in order to apply Theorem \ref{mainteo}. We conclude that the problem is right-sided with optimal threshold $x^*$. Observe that the hypotheses of Theorem \ref{teosf} and corollaries \ref{scalesf} and \ref{clasicsf} are fulfilled. In consequence all the variants of smooth fit principle hold. This problem was solved by Merton in \cite{merton}.

\subsection{Russian Options}

The Russian Option was introduced by Shepp and Shiryaev in 1993 in  \cite{shepp93}, where the option pricing problem is solved by reduction to an optimal stopping problem of a two-dimensional Markov process. 
Later, in \cite{shepp94}, the authors give an alternative approach to the same problem solving a one-dimensional optimal stopping problem. In 2000, Salminen \cite{salminen00RussianOptions}, making use of a generalization of 
L\'evy's theorem for Brownian motion with drift, shortened the derivation of the valuation formula in \cite{shepp94} and solved the optimal stopping problem related. 

Consider $\alpha>0$, $r>0$ and $\sigma>0$. Let $\{X_t\}$ be a Brownian motion on $I=[0,\infty)$,  with drift $-\delta < 0$, where $\delta=\frac{r+\sigma^2/2}{\sigma} $ and reflected at $0$.
In \cite{salminen00RussianOptions} it is shown that the optimal stopping problem to be solved has underlying process $\{X_t\}$
and reward function $g(x)=e^{\sigma x}$. For the basic characteristics of the process we refer to  \cite{borodin}, p. 129.
The infinitesimal generator is $Lf(x)=f''(x)/2-\delta f'(x)$ for $x>0$ and $Lf(0)=\lim_{x\to 0^+} Lf(x)$,
 with domain 
\begin{equation*}
\D_L=\{f: f,\ Lf \in \CC_b(\I), \lim_{x\to 0^+}f'(x)=0\}. 
\end{equation*} 
The payoff function $g(x)$ satisfies the RRC for every $x_1>0$: for $x_1>0$ it is easy to find a function $\gi$ with continuous second derivatives such that $\gi=g$ in $[x_1,\infty)$ and such that the right derivative at 0 is 0. By the application of Proposition \ref{propInversion2} we obtain that $\gi$ satisfies the inversion formula \eqref{eq:invh}, then the RRC holds. We obtain
\begin{equation*}
\al g(x)=(\alpha - \sigma^ 2/2 + \delta \sigma)e^{\sigma x} = (\alpha + r)e^{\sigma x}{>0}. 
\end{equation*}
In order to apply Theorem \ref{mainteo} we solve equation \eqref{eq:defx*teoigual} which in this case is
\begin{equation*}
e^{\sigma x}=\frac{1}{\gamma-\delta}\left(\frac{\gamma - \delta}{2 \gamma} e^{(\gamma+\delta) x}+\frac{\gamma + \delta}{2 \gamma} e^{-(\gamma-\delta) x} \right)\int_{x}^\infty 2 (\alpha+r) e^{(-\gamma-\delta+\sigma) y}dy, 
\end{equation*}
with $\gamma=\sqrt{2 \alpha + \delta^2}$, obtaining (observe that  $-\gamma-\delta+\sigma<0$)
\begin{equation*}
x^*=\frac{1}{2\gamma}\ln{\left(\left(\frac{\gamma+\delta}{\gamma-\delta}\right)\left(\frac{\gamma-\delta+\sigma}{\gamma+\delta-\sigma}\right)\right)}. 
\end{equation*}
It is easy to verify conditions \eqref{eq:algmayor0} and \eqref{eq:mayorizq}, 
to obtain, by application of Theorem \ref{mainteo}, that the problem is right-sided with threshold $x^*$, as proved in \cite{shepp94}.

\subsection{Skew Brownian motion}
We consider a Brownian motion skew at zero. This process behaves like a standard Brownian motion outside the origin, 
but has an asymmetric behaviour when hitting $x=0$, modeling a permeable barrier. 
The behaviour at $x=0$ is regulated by a parameter $\beta\in (0,1)$,  
known as the \emph{skewness parameter}.
The state space of this process is $\I=\R$. For details on this process and its basic characteristics we refer to see \cite{borodin}, p. 126 or \cite{lejay}. 
The infinitesimal generator is $Lf(x)=f''(x)/2$ if $x\neq 0$ and $Lf(0)=\lim_{x\to 0}Lf(x)$, with domain 
\begin{equation*} 
\D_L=\{f: f,\ Lf \in \CC_b(\I),\ \beta f'(0^+)=(1-\beta)f'(0^-)\}. 
\end{equation*}

Consider the payoff function $g(x)=x^+$. Function $g$ satisfies the RRC for $x_1=0$: to see this it is necessary to construct $\gi$ such that $\gi=\g$ in $[0,\infty)$ $\gi$ with second derivative bounded in $(-\infty,0)$ and such that $\gi'(0^-)=\beta/(1-\beta)$ (so that $\gi$ satisfies the local conditions to belong to $\D_L$). Applying Proposition \ref{propInversion2} it can be concluded that $\gi$ satisfies \eqref{eq:invh}, so the RRC holds. We have $\al g(x)= \alpha x\  (x\geq0).$
Equation \eqref{eq:defx*teoigual} is in this case
\begin{equation*}
x^*=\frac{1}{\sqrt{2\alpha}}\left(\frac{1-2\beta}{\beta}\sinh(\sqrt{2\alpha}\ x^*)+e^{\sqrt{2\alpha}\ x^*}\right)\int_{x^*}^\infty e^{-\sqrt{2\alpha}\ t}\alpha t\ 2\beta dt 
\end{equation*}
or equivalently
\begin{equation} \label{eq:xxskew} x^*=\frac{1}{2\sqrt{2\alpha}}\left((2\beta-1)e^{-\sqrt{2\alpha}\ x^*}(\sqrt{2\alpha}\ x^*+1)+\sqrt{2\alpha}\ x^*+1\right).
\end{equation}
In general, this equation can not be solved analytically. 
If we consider the particular case $\beta=\frac{1}{2}$, in which the process is the ordinary Brownian motion, we obtain the known result
$x^*=\frac{1}{\sqrt{2\alpha}}$ (see \cite{taylor}). 
Consider a particular case, in which $\alpha=1$ and $\beta=0.9$. Solving numerically equation \eqref{eq:xxskew} we obtain
$
x^*\simeq 0.82575.
$
Checking \eqref{eq:algmayor0} and \eqref{eq:mayorizq} we conclude
that the problem is right-sided with optimal threshold $x^*$.

\subsubsection{An example without smooth fit}
Consider again the Skew Brownian motion with parameter $\beta=1/3$, a payoff function 
$g(x)=(x+1)^+$ and a discount $\alpha=1/8$.
We have $\al g(x)=\alpha (x+1)\ (x\geq0)$.
Observe that $x^*=0$ is a solution of \eqref{eq:defx*teo} (with equality). 
It is easy to see that the assumptions of Theorem \ref{mainteo} are fulfilled. 
We conclude that the problem is right-sided with threshold $x^*=0$. 
Moreover, we know
\begin{equation*}
\Va(x)=
\begin{cases}
x+1, \quad & x\geq 0, \\
\psi_\alpha(x) ,\quad  & x\leq 0. 
\end{cases}
\end{equation*}
where
\begin{equation*}
\psia(x)=
\begin{cases}
e^{\sqrt{2\alpha}\ x}, & x\leq 0, \\
\frac{1-2\beta}{\beta}\sinh(x \sqrt{2\alpha})+e^{\sqrt{2\alpha}\ x},\quad & x\geq 0.
\end{cases}
\end{equation*}

Unlike the previous examples, the value function $\Va$ is not differentiable at $x^*$. 
As we see in Figure \ref{fig:skewNoFit}, the graph of $\Va$ shows an angle at $x=0$. 
By application of Theorem \ref{teosf} and Corollary \ref{scalesf} we conclude that $\Va$ is differentiable with respect to $\psia$ and \ssfp\ hold.

An example considering a regular diffusion with non-differentiable scale function, in which the the SF fails to hold, was provided for the first time by Peskir \cite{peskir07}.
\begin{figure}
\begin{center}
\resizebox*{8 cm}{!}{\includegraphics{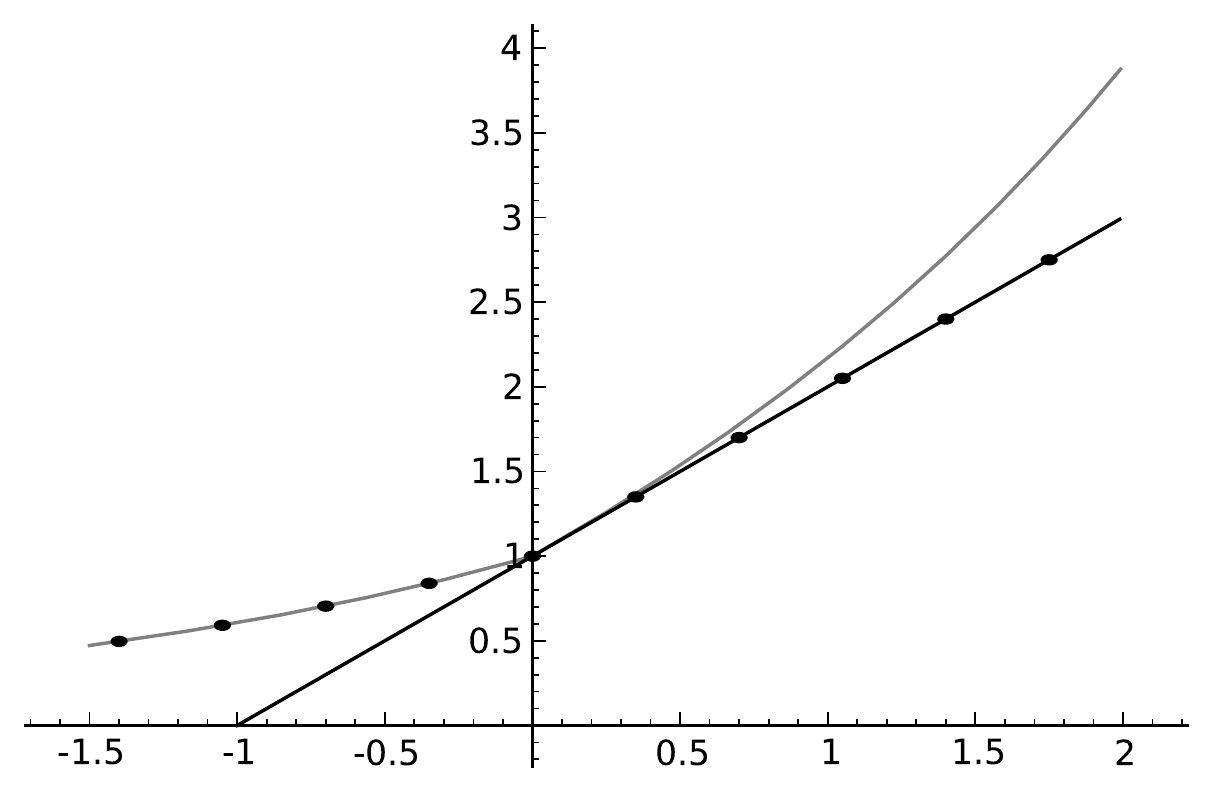}}%
\end{center}
\caption{\label{fig:skewNoFit} 
Solution of the OSP for the skew Brownian with $\alpha=1/8$, $\beta=1/3$ and reward function $g=(x+1)^+$. 
Black graph corresponds to $g$, while the gray one corresponds to $\psia$. 
$\Va$ is the dotted line.
}
\end{figure}
\subsection{Sticky Brownian Motion}
Consider a Brownian motion, sticky at 0. We refer to \cite{borodin} p. 123 for the basic characteristics of this process. 
It behaves like a  Brownian motion out of 0, but spends a positive time at $x=0$; this time depends on a positive parameter that we assume to be $1$ in this example.
The state space of this process is $\I=\R$. The scale function is $s(x)=x$ and the speed measure is $m(dx)=2dx+2\delta_{\{0\}}(dx)$. 
The infinitesimal generator is $Lf(x)=f''(x)/2$ when $x\neq 0$, and $Lf(0)=\lim_{x\to 0}Lf(x)$,
with domain 
\begin{equation*} 
\D_L=\{f: f,\ Lf \in \CC_b(\I),\ f''(0^+)=f'(0^+)-f'(0^-)\}. 
\end{equation*}
Consider the reward function $g(x)=(x+1)^+$, that satisfies the RRC for $x_1=-1$ (it can be seen with the same arguments considered so far). 
We discuss the solution of the optimal stopping problem in terms of the discount factor, 
in particular we are interested in finding values of $\alpha$ such that the optimal threshold is the sticky point. 
We use \eqref{eq:defx*teoigual} in a different way: we fix $x=0$ and solve the resulting equation in $\alpha$. 
We obtain
\begin{equation*}
 1=\wa^{-1} \int_{(0,\infty)} e^{-\sqrt{2\alpha}\ y}\alpha(y+1) 2dy
\end{equation*}
and we find that $\alpha_1=\frac{(-1+\sqrt{5})^2}{8}\simeq 0.19$ is the unique solution. 
It can be seen, by application of Theorem \ref{mainteo}, that if $\alpha=\alpha_1$ the problem is right-sided with threshold $x^*=0$. In this case the representing measure of $\Va$ does not charge $x^*$ despite the speed measure does. 
Furthermore, Theorem \ref{teosf} can be applied to conclude that $\Va$ is differentiable with respect to $\psia$. 
It also should be noticed that both SF and SSF fail to hold in this case. This was expectable because the sufficient conditions given in \cite{samee2010principle} and \cite{salminen85} for the different types of smooth fit are not fulfilled. 
Another interesting thing to remark is that $d(\Va/\phia)/ds$ (and also $d(\Va/\psia)/ds$) exists at 0, which is part of the conclusion of Theorem 2.1 in \cite{samee2010principle}, despite this result is not applicable in this case. This last fact seems to be related with the existence of $d\Va / d\psia$

Another approach to obtain $x^*=0$ is when the strict inequality holds in \eqref{eq:defx*teo} and also \eqref{eq:otrax*} holds. We solve (in $\alpha$)  equation \eqref{eq:otrax*} (with equality), which is
\begin{equation}
\label{eq:alpha2}
g(0)=\wa^{-1}\psia(0)\int_{[0,\infty)}\psia(y)\al g(y) m(dy).
\end{equation}
Since the measure $m(dy)$ has an atom at $y=0$, the solution of the previous equation differs from $\alpha_1$. 
Solving \eqref{eq:alpha2} we find the root $\alpha_2=1/2$. It is easy to see that for $\alpha \in [\alpha_1,\alpha_2]$, the minimal $x$ satisfying the inequality \eqref{eq:defx*teo} is 0. Theorem \ref{mainteo} can be applied to conclude that the problem is right-sided with threshold $x^*=0$.
For $\alpha \in (\alpha_1,\alpha_2]$  we cannot apply any of the results of section \ref{section:sf}, and in fact, for $\alpha\in (\alpha_1,\alpha_2)$ none of the smooth fit principles is fulfilled. 
With $\alpha=\alpha_2$ there is \sfp\ (and also \ssfp, since $s(x)=x$), but this is not a consequence of \eqref{eq:alpha2}, 
it is due to the particular reward function. 
This example shows that the theorems on smooth fit in section \ref{section:sf}
only gives sufficient conditions.
In table \ref{table} we summarize the information about the solution of the optimal stopping problem. We also give, in Figure \ref{fig:sticky}, some graphics showing the solution for different values of $\alpha$.

\begin{table}
\label{table} SBM: solution of the OSP depending on $\alpha$\\
{\begin{tabular}{@{}lccccccc}
\hline
$\alpha$ & $x^*$ & \eqref{eq:defx*teo} & \eqref{eq:otrax*} & \sfp\ \& \ssfp &$\exists  \ d\Va / d \psia$ & $\exists \ d(\Va / \phia)/ds$ & Fig.\\
\hline
$\alpha\in(0,\alpha_1)$ & $x^*>0$ & = & = & yes & yes & yes &\ref{fig:menor}\\
$\alpha=\alpha_1$ & $x^*=0$ & = & $<$ & no & yes & yes&\ref{fig:a1}\\
$\alpha\in(\alpha_1,\alpha_2)$& $x^*=0$ & $>$ & $<$ & no & no & no&\ref{fig:medio}\\
$\alpha=\alpha_2$ & $x^*=0$ & $>$ & = & yes & no & no&\ref{fig:a2}\\
$\alpha\in (\alpha_2,+\infty)$ & $x^*<0$ & = & = & yes & yes & yes&\ref{fig:mayor}\\
\hline
\end{tabular}}
\end{table}
\begin{figure}
\begin{center} 
\subfigure[\label{fig:menor}$\alpha=0.1$]{
\resizebox*{6.9cm}{!}{\includegraphics{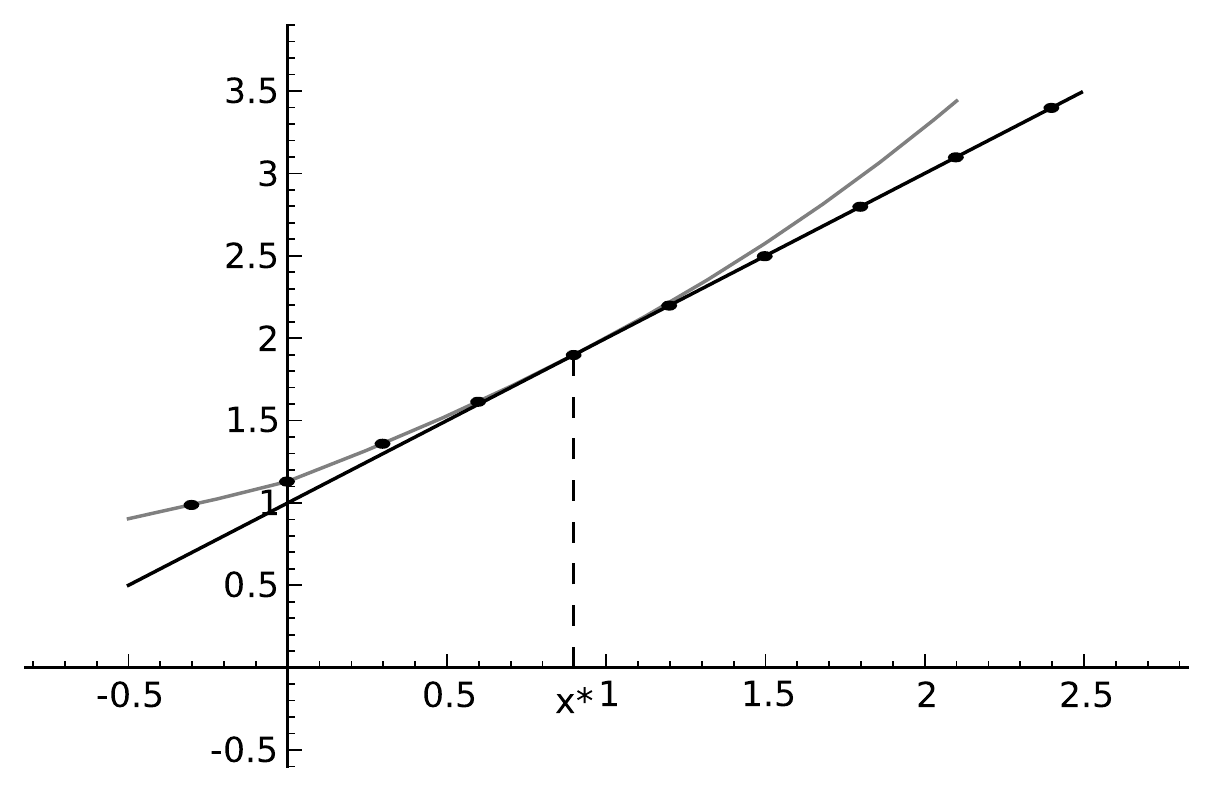}}}%
\subfigure[\label{fig:a1}$\alpha=\alpha_1\simeq 0.19$]{
\resizebox*{6.9cm}{!}{\includegraphics{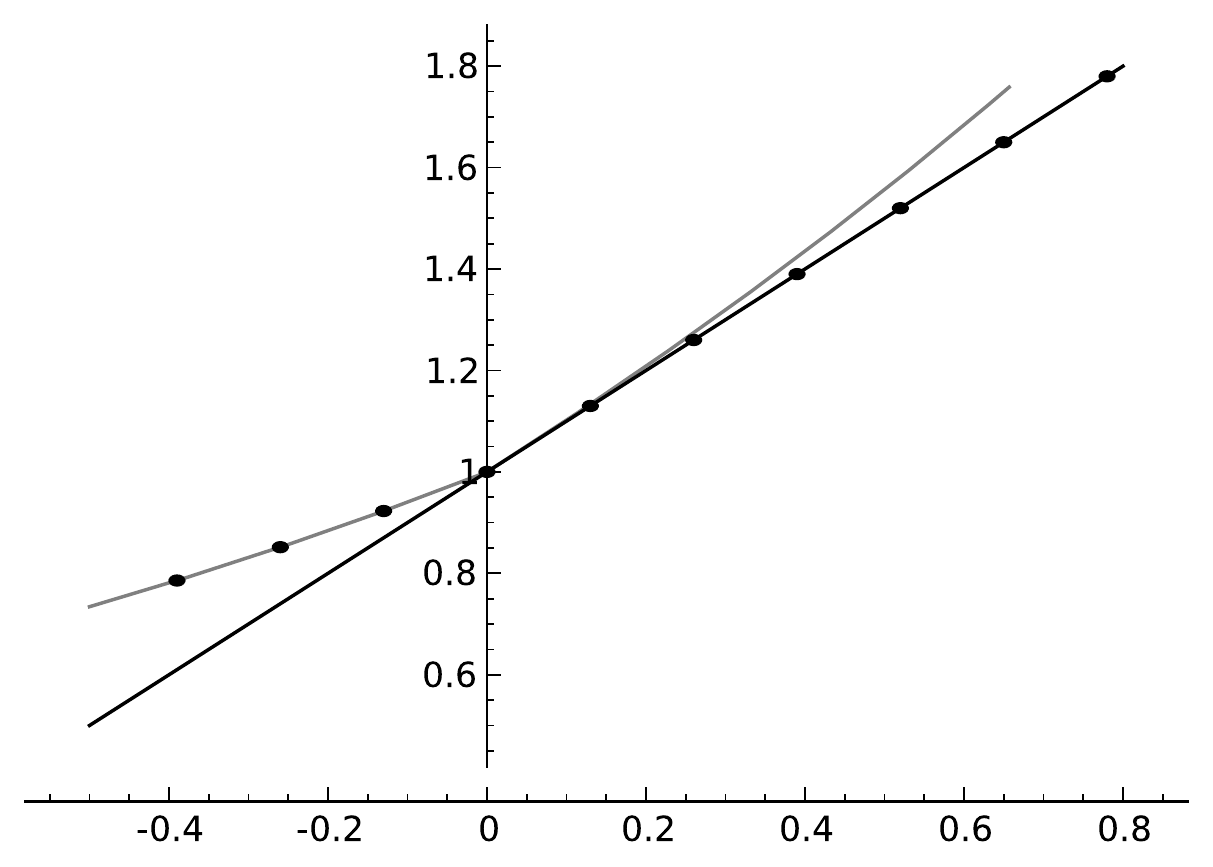}}}\\
\subfigure[\label{fig:medio}$\alpha=0.28$]{
\resizebox*{6.9cm}{!}{\includegraphics{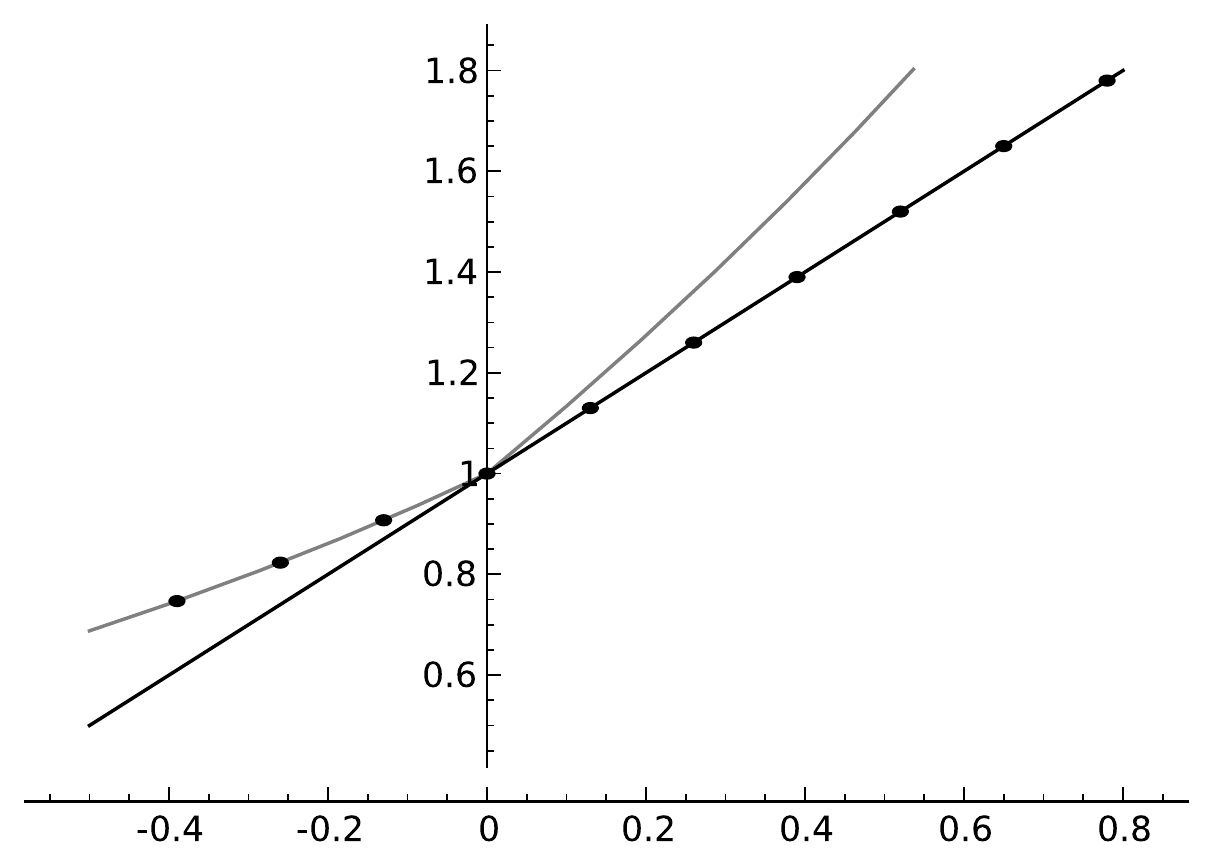}}}%
\subfigure[\label{fig:a2}$\alpha=\alpha_2=0.5$]{
\resizebox*{6.9cm}{!}{\includegraphics{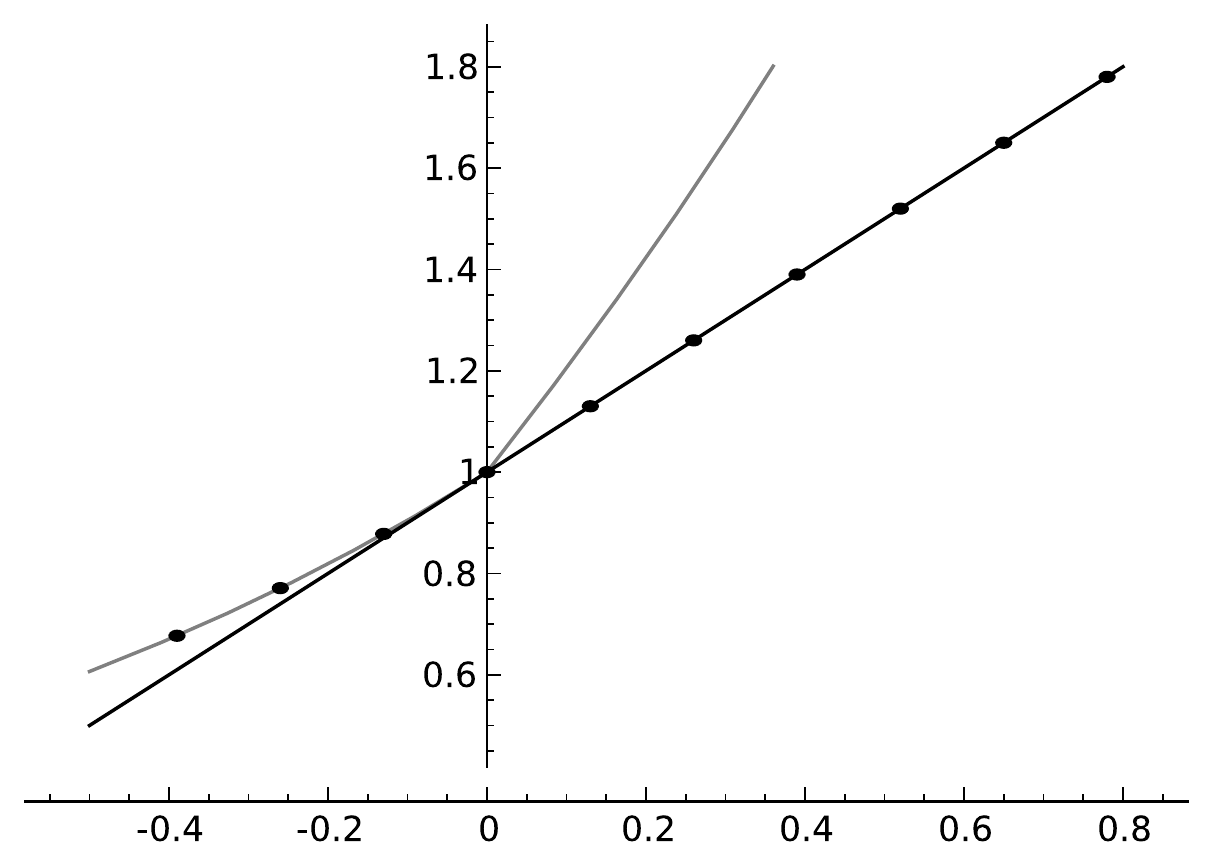}}}\\
\subfigure[\label{fig:mayor}$\alpha=2$]{
\resizebox*{6.9cm}{!}{\includegraphics{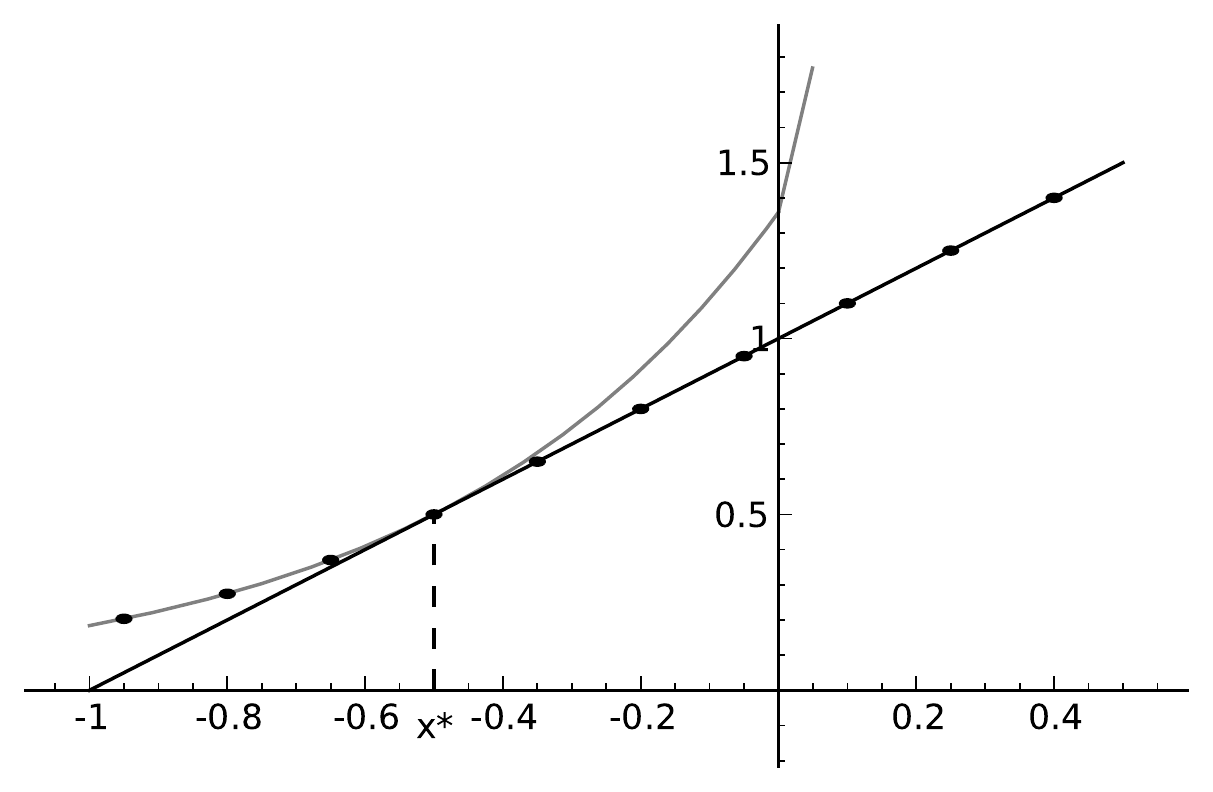}}}%
\end{center}
\caption{\label{fig:sticky}Solution of the OSP for the sticky Brownian motion with different values of $\alpha$ with reward function $(x+1)^+$ (graphic in black). The graphic in gray corresponds to $(cte) \psia$. The value function $\Va$ is indicated with dots, it coincides with $(cte) \psia$ for $x\leq x^*$ and with $g$ for $x\geq x^*$.}
\end{figure}

\end{document}